\newtheorem{thm}{Theorem}
\newtheorem{exam}{Example}
\newtheorem{cor}[thm]{Corollary}
\newtheorem{lem}[thm]{Lemma}
\newtheorem{prop}[thm]{Proposition}
\newtheorem{defn}{Definition}
\newtheorem{rmk}[thm]{Remark}
\begin{document}
 
\title[$\epsilon$-noncrossing partitions]{$\epsilon$-noncrossing partitions and\\ cumulants in free probability}

\vspace{1cm}

\author{Kurusch Ebrahimi-Fard}
\address{Department of Mathematical Sciences, 
		Norwegian University of Science and Technology (NTNU), 
		7491 Trondheim, Norway.
		{\tiny{On leave from UHA, Mulhouse, France.}}}
         	\email{kurusch.ebrahimi-fard@math.ntnu.no, kurusch.ebrahimi-fard@uha.fr}         
        		\urladdr{https://folk.ntnu.no/kurusche/}

\author{Fr\'ed\'eric Patras}
\address{Univ.~de Nice,
		Labo.~J.-A.~Dieudonn\'e,
         	UMR 7351, CNRS,
         	Parc Valrose,
         	06108 Nice Cedex 02, France.}
		\email{patras@unice.fr}
		\urladdr{www-math.unice.fr/$\sim$patras}

\author{Roland Speicher}
\address{Universit\"at~des Saarlandes,
		Fachrichtung Mathematik,
		Postfach 151150,
		66041 Saarbr\"ucken, Germany}
		\email{speicher@math.uni-sb.de}
		\urladdr{https://www.math.uni-sb.de/ag/speicher/index.html}


\begin{abstract}
Motivated by recent work on mixtures of classical and free probabilities, we introduce and study the notion of $\epsilon$-noncrossing partitions. It is shown that the set of such partitions forms a lattice, which interpolates as a poset between the poset of partitions and the one of noncrossing partitions. Moreover, $\epsilon$-cumulants are introduced and shown to characterize the notion of $\epsilon$-independence.
\end{abstract}

\maketitle

\begin{quote}
{\tiny{\bf Keywords:} Free probability; moment-cumulant relation; noncrossing partitions; $\epsilon$-independence}\\
{\tiny{\bf MSC Classification: 46L50}}
\end{quote}


\section{Introduction}
\label{sec:intro}

{
The notion of $\epsilon$-independence was introduced by M\l{}otkowski in \cite{Mlotkowski_2004} and developed further in the recent work \cite{speichWyso_2016}. This concept describes a mixture of classical and free independence; thus on one hand, its investigation allows a very general frame which should include classical and free probability at the same time. On the other hand, this twilight zone between classical and free yields also a wealth of new examples for non-commutative forms of independences and stimulate new directions for investigations. For example, the determination of the quantum symmetries behind this $\epsilon$-independence in \cite{speicherweber_2016} gave not only new examples of quantum groups, but led also to the discovery of $\epsilon$-versions of spheres. A crucial ingredient in \cite{speicherweber_2016} was the reformulation of $\epsilon$-independence from \cite{speichWyso_2016} in terms of free cumulants. However, though this formulation gave a useful description for the calculation of mixed moments in $\epsilon$-independent variables it was lacking a more general conceptual approach; which should rest on a general notion of $\epsilon$-cumulants. Here we will provide this missing part of the theory. Namely, we introduce and explore the notion of $\epsilon$-noncrossing partitions. It will be shown that they form a lattice, which interpolates as a poset between the poset of partitions and the one of noncrossing partitions. Then we will use these $\epsilon$-noncrossing partitions to define, in the standard way, $\epsilon$-cumulants. Our main theorem will then be that $\epsilon$-independence is equivalent to the vanishing of mixed $\epsilon$-cumulants. This includes as special cases the corresponding characterizations of classical independence by classical cumulants and of free independence by free cumulants. The calculation rule for mixed moments for $\epsilon$-independent variables from \cite{speichWyso_2016} is then also an easy consequence of this.
}

\medskip

The article is organized as follows. In the next section we review the notion of $\epsilon$-independence and fix notation. The third section introduces $\epsilon$-noncrossing partitions. The fourth section shows that they form a lattice. The fifth section proves various identities satisfied by $\epsilon$-cumulants. The last section shows the equivalence between $\epsilon$-independence and the vanishing of $\epsilon$-cumulants.

\smallskip
{\bf{Acknowledgements}}: The authors acknowledge support from the grant CARMA
ANR-12-BS01-0017, from the CNRS-CSIC PICS Program ``Alg\`ebres
de Hopf combinatoires et probabilit\'es non commutatives'' and from the ERC-Advanced grant ``Noncommutative distributions in free probability'' (grant no.~339760).


\section{Preliminaries}
\label{sect:prelim}

In what follows $\mathbb K$ denotes a ground field of characteristic zero over which all algebraic structures are considered. The set of partitions of $[n]:=\{1,\ldots,n\}$ is denoted $\mathcal{P}_n$ and forms a lattice \cite{Stanley}. Recall that a partition $\Gamma=\Gamma_1 \coprod \cdots \coprod \Gamma_k$ of $[n]$ consists of a collection of (non-empty) subsets $\Gamma_i \subset [n]$, called blocks, which are mutually disjoint, $\Gamma_i \cap \Gamma_j=\emptyset$ for all $i \neq j$, and whose union, $\cup_{i=1}^k \Gamma_i$, is $[n]$. 
The number of blocks of $\Gamma$ and the number of elements in a specific block $\Gamma_i$ of $\Gamma$  are denoted by $|\Gamma|:=k$ and $|\Gamma_i|$, respectively. If $X$ is a subset of $[n]$, we write $\Gamma_{|X}$ for the restriction of $\Gamma$ to $X$: $\Gamma_{|X}=(\Gamma_1\cap X)\coprod \cdots \coprod (\Gamma_k\cap X).$

The lattice $\mathcal{P}_n$ has a partial order induced by refinement, i.e., $\Gamma \leq \Theta$ if $\Gamma$ is a finer partition than $\Theta$. The partition denoted by $\hat{1}_n$ consists of a single block, i.e., $|\hat{1}_n|=1$. It is the maximal element in $\mathcal P_n$. The partition $\hat{0}_n$, on the other hand, is the one with $n$ blocks and is the minimal element in $\mathcal P_n$. For example, for $n=3$ the maximal and minimal elements in $\mathcal P_3$ are $\{1,2,3\}$ and $\{1\}\coprod\{2\}\coprod\{3\}$, respectively. The other partitions of order three are: $\{1\}\coprod\{2,3\}$; $\{1,2\}\coprod\{3\}$; $\{1,3\}\coprod\{2\}$. A partition $\Gamma$ is called {\it{noncrossing}} if and only if there are no elements $i<j<k<l$ in $[n]$, such that $\{i,k\}$ and $\{j,l\}$ belong to two disjoint blocks of the partition $\Gamma$. Noncrossing partitions form a lattice denoted $\mathcal{NC}_n$. For $n=1,2,3$ all partitions are noncrossing. For $n=4$ the partition $\{1,3\}\coprod\{2,4\}$ is not in $\mathcal{NC}_4$. 

\smallskip 

Recall that a {\it{noncommutative probability space}}, $(A,\phi)$, consists of an associative $\mathbb K$-algebra $A$ with unit $1_A$ together with a unital linear functional $\phi : A \to  {\mathbb K}$ (so that $\phi(1_A)=1$). For background, definitions and terminology of free probability the reader is referred to \cite{SpeicherNica}. 

In the following $I$ denotes a finite or infinite index set, and for $i,j \in I$, $\epsilon_{ij}=\epsilon_{ji} \in \{0,1\}$. 
{For the definition of $\epsilon$-independence the values $\epsilon_{ii}$ on the diagonal do not play a role. They will only become important later, when we are going to define $\epsilon$-noncrossing partitions and $\epsilon$-cumulants.
Notice that for this we will not assume that $\epsilon_{ii} =0$, but instead will allow for all possible combinations of $0$ and $1$ for the diagonal values.}

The subset $I_n^\epsilon \subset I$ consists by definition of $n$-tuples $\mathbf{i}=\{i(1),\ldots,i(n)\}$ which satisfy that if $i(k) = i(l)$ for $1 \le k < l \le n$, then there exists a $k < p < l$ such that $ i(k) \neq i(p)$ and $\epsilon_{i(p) i(k)} =0.$

For any linear functional $\psi:\bigoplus_{l=0}^\infty A^{\otimes l}\to\mathbb K$, any sequence ${\mathbf a}=(a_1,\dots,a_n)$ of elements of $A$ and any partition $\Gamma =\Gamma_1\coprod \cdots \coprod \Gamma_k \in \mathcal{P}_n$, we write 
\begin{equation}
\label{def:tensormaps}
	\psi^\Gamma ({\mathbf a})=\psi^\Gamma (a_1,\dots,a_n):=\prod\limits_{i=1}^k\psi (a_{\Gamma_i}),
\end{equation}
where for $X=\{x_1,\dots,x_i\}\subset [n]$ with $x_i<x_{i+1}$ for $ i<n$:
 $$
 	\psi(a_X):=\psi(a_{x_1}\otimes\dots \otimes a_{x_i}).
$$


We now consider unital subalgebras $(A_i)_{i \in I}$ in $A$, and define the notion of $\epsilon$-{\it{independence}}. It is assumed that these algebras share the unit $1_A$ with $A$. However, sometimes we will have to distinguish notationally between the units according to the algebra they are considered to belong to, and will then write $1_i$ for the unit of the subalgebra $A_i$.

\begin{defn} \label{epsilon-independence}
Let $(A,\phi)$ be a noncommutative probability space. The subalgebras $A_i \subset A$, $i \in I$, are {\rm{$\epsilon$-independent}} if and only if 
\begin{enumerate}
	\item[i)] \label{item1} the algebras $A_i$ and $A_j$ commute  {for all $i\not=j$ for which $\epsilon_{ij}=1$.}
	
\item[ii)] \label{item2} for $({i(1)}, \ldots,{i(n)}) \in  I_n^\epsilon$ and $(a_1,\ldots,a_n) \in A_{i(1)} \times \cdots \times A_{i(n)}$, such that $\phi(a_k)=0$ for all $1 \le k \le n$, it follows that $\phi(a_1 \cdots a_n)=0$.
\end{enumerate}
\end{defn}

Let us consider the set of maps from $[n]$ to $I$, $I^{[n]}$, as a set of decorations of the integers. An element $d$ of $I^{[n]}$ is called simply a decoration and is represented by the sequence of values $(d(1),\dots,d(n))$. For a given $d \in I^{[n]}$, a {\it{$d$-decorated partition}} $\Gamma$ of $[n]$ is a partition $\Gamma =\Gamma_1\coprod \cdots \coprod \Gamma_k$ where the elements of block $\Gamma_i$ inherit the  $d$-decorations. The set of $d$-decorated partitions of $[n]$ is denoted by $\mathcal P^d_n$. For notational clarity, we will write $\bar\Gamma = \bar\Gamma_1 \coprod \cdots \coprod\bar\Gamma_k$ for the underlying undecorated partition of $[n]$. The $d$-decorated partitions are represented indexing the integer $i$ by $d(i)$. For example, in $\mathcal P^d_3$ the $d$-decorated partitions are denoted: $\{1_{d(1)},2_{d(2)},3_{d(3)}\}$; $\{1_{d(1)}\}\coprod  \{2_{d(2)}\}\coprod\{3_{d(3)}\}$; $\{1_{d(1)}\}\coprod\{2_{d(2)},3_{d(3)}\}$; $\{1_{d(1)},2_{d(2)}\}\coprod\{3_{d(3)}\}$; $\{1_{d(1)},3_{d(3)}\}\coprod\{2_{d(2)}\}$. 

The $n$-th symmetric group, $S_n$, is acting on decorated partitions as follows. For any $d$-decorated partition $\Gamma$ of $[n]$, the action of $\sigma\in S_n$ is the usual one on the underlying partition (i.e., the one induced by the action on elements):
$$
	\overline{\sigma(\Gamma)}:={\sigma(\bar\Gamma_1)}\coprod \cdots\coprod{\sigma(\bar\Gamma_k)}.
$$
However, in the $d$-decorated case the integer $\sigma(i)$ is now decorated by $d(i)$. That is, the decoration, written $\sigma(d)$, of $\sigma(\Gamma)$ is given by $\sigma(d)(i):=d(\sigma^{-1}(i)).$ For example, under the transposition $\tau_2=(2,3)$ the $d$-decorated partitions in $\mathcal P^d_3$ in the above example are mapped to: $\{1_{d(1)},3_{d(2)},2_{d(3)}\}$; $\{1_{d(1)}\}\coprod\{3_{d(2)}\}\coprod\{2_{d(3)}\}$; $\{1_{d(1)}\}\coprod\{3_{d(2)},2_{d(3)}\}$; $\{1_{d(1)},3_{d(2)}\}\coprod\{2_{d(3)}\}$; $\{1_{d(1)},2_{d(3)}\}\coprod\{3_{d(2)}\}$.

Let $X=\{x_1,\dots,x_n\}$ (where $x_i<x_{i+1}$) be an arbitrary finite subset of the integers decorated by $d\in I^X$. We call {\it{standardization of $d$}} and write $st(d)$ the decoration of $[n]$ defined by $st(d)(i):=d(x_i)$. Writing $\mu$ for the map $\mu:X\to [n]$, $x_i \mapsto \mu(x_i):=i$, a $d$-decorated partition $\Gamma$ of $X$ gives rise to a $st(d)$-decorated partition $\mu(\Gamma)$ of $[n]$ called the {\it{standardization of $\Gamma$}} and written $st(\Gamma)$.


\section{$\epsilon$-noncrossing partitions}
\label{sect:encp}

With the aim to simplify the presentation of the material, an assertion such as ``consider a decorated partition $\Gamma$'' shall mean from now on and if not stated otherwise, ``for $d \in I^{[n]}$ consider a $d$-decorated partition $\Gamma=\Gamma_1\coprod \cdots \coprod \Gamma_k \in \mathcal P^d_n$''.

{We will now define the notion of $\epsilon$-noncrossing partitions. For this the value of $\epsilon$ on the diagonal will become important; different choices for the values on the diagonal will lead to different kinds of  $\epsilon$-cumulants for the description of the same $\epsilon$-independence. The idea is that $\epsilon_{d(i)d(j)}$ tells us whether we can use $a_ia_j=a_ja_i$, for $a_i\in A_{d(i)}$ and $a_j\in A_{d(j)}$, in order to bring a decorated partition in a noncrossing form; however, if $d(i)=d(j)$ then $a_ia_j=a_ja_i$ would mean that elements in $A_{d(i)}$ have to commute, which we do not want to assume. By putting $\epsilon_{d(i)d(j)}=1$ we will thus only enforce such a commutation between $a_i$ and $a_j$ if they belong to two different blocks in our partition.
}

\begin{defn}
Let $\Gamma \in \mathcal P^d_n$ be a decorated partition such that $\epsilon_{d(i)d(i+1)}=1$ and that either

(a) $d(i)\not=d(i+1)$  

or 

(b) $d(i)=d(i+1)$ and $i$ and $i+1$ belong to different blocks of $\Gamma$. 

Then we say that the transposition $\tau_i:=(i,i+1)$ is an {\rm{allowed move}} between the $d$-decorated partition $\Gamma$ and the $\sigma(d)$-decorated partition $\Gamma' := \tau_i(\Gamma)$. An element $\sigma\in S_n$ which can be written as a sequence $\sigma=\tau_{i_k}\cdots \tau_{i_1}$ of allowed moves (i.e., $\tau_{i_j}$ is an allowed move from $\tau_{i_{j-1}}\cdots\tau_{i_1}(\Gamma)$ to $\tau_{i_{j}}\cdots\tau_{i_1}(\Gamma)$) is called an {\rm{allowed permutation}} of the partition~$\Gamma$.
\end{defn}

{Note that excluding $\tau_i$ when $\epsilon_{d(i)d(i+1)}=1$, $d(i)=d(i+1)$ and $i$ and $i+1$ belong to the same block as an allowed move does not really present a restriction because in this case $\tau_i(\Gamma)$ would be the same as $\Gamma$, so such a move would not have any effect.}

For example, consider the $d$-decorated partition $\Gamma=\{1_{d(1)},3_{d(3)}\}\coprod\{2_{d(2)},4_{d(4)}\}$ in $\mathcal P^d_4$ with $\epsilon_{d(2)d(3)}=1$. The transposition $\tau_2$ is an allowed move and maps $\Gamma$ to $\Gamma'=\{1_{d(1)},2_{d(3)}\}\coprod\{3_{d(2)},4_{d(4)}\}$. For $\epsilon_{d(1)d(2)}=1$ the transposition $\tau_1$ is allowed and maps $\Gamma$ to $\Gamma'=\{2_{d(1)},3_{d(3)}\}\coprod\{1_{d(2)},4_{d(4)}\}$. 

Recall that a {\it{connected subset}} of $[n]$ is an interval, i.e., an arbitrary subset of consecutive integers $\{k,k+1,\dots,k+p\} \subset [n]$.

\begin{defn} A decorated partition $\Gamma  \in \mathcal P^d_n$ is called {\rm{reducible}} if there exists an allowed permutation $\sigma \in S_n$ of $\Gamma$ and an index $i\leq k$ such that $\sigma(\bar\Gamma_i)$ is a connected subset of $[n]$. We then say that $(\sigma,i)$ is a reduction and that $\Gamma$ can be reduced to the decorated partition 
$$
	\Gamma':=st\big(\sigma(\Gamma_1)\coprod\cdots \coprod\sigma(\Gamma_{i-1})\coprod \sigma(\Gamma_{i+1})\coprod\cdots\coprod \sigma(\Gamma_k)\big)
			=:(\sigma,i)(\Gamma),
$$
with $\bar\Gamma'$ being a partition of $[n - |\bar\Gamma_i|]$.
\end{defn}

\begin{defn}
A decorated partition in $\mathcal P^d_n$ is called {\rm{$\epsilon$-noncrossing}} if it can be reduced to the empty set by a sequence of reductions ${\mathbf \sigma}=(\sigma_l,i_l)\circ\dots\circ (\sigma_1,i_1).$ Such a sequence is called a {\rm{trivialization}} of $\Gamma$. The subset of $d$-decorated $\epsilon$-noncrossing partitions is denoted by ${\mathcal P}^{d,\epsilon}_{n}$.
\end{defn}

\begin{exam} {\rm{Any decorated noncrossing partition is $\epsilon$-noncrossing, $\mathcal{NC}^d_n \subset {\mathcal P}^{d,\epsilon}_{n}$. Assume that $\epsilon_{d(i)d(j)}=1$ for all $i\not=j$. Then, all $d$-decorated partitions are $\epsilon$-noncrossing. Assume that $\epsilon_{d(i)d(j)}=0$ for all pairs $(i,j)$, $i\not= j$, then a decorated partition is $\epsilon$-noncrossing if and only if it is noncrossing.}}
\end{exam}
  
\begin{defn}
Let $(A,\phi)$ be a noncommutative probability space. The $\epsilon$-cumulant $k_\epsilon$ is a map from the direct sum of tensor products $A_{d(1)}\otimes \cdots \otimes A_{d(n)}$ to $\mathbb K$, where $d$ runs over $I^{[n]}$, defined inductively by
$$
	\phi(a_1 \cdots a_n)=\sum\limits_{\Gamma \in {\mathcal P}^{d,\epsilon}_{n}}k_\epsilon^{\Gamma}({\mathbf a}),
$$
where ${\mathbf a}=(a_1,\dots,a_n)$ and ${\mathcal P}^{d,\epsilon}_{n}$ stands for the set of $d$-decorated $\epsilon$-noncrossing partitions.
\end{defn}

For example, consider ${\mathbf a}=(a_{1}, a_{2}, a_{3}, a_{4})$ in $A_{d(1)}\otimes \cdots \otimes A_{d(4)}$ with $\epsilon_{d(i)d(j)}=0$ for all pairs $(i,j)$, $i\not= j$, except $\epsilon_{d(2)d(3)}=1$. Then 
$$
	\phi(a_{1} a_{2} a_{3} a_{4})=\sum\limits_{\Gamma \in \mathcal{NC}^d_4}k_\epsilon^{\Gamma}({\mathbf a}) + k_\epsilon(a_{1} \otimes a_{3})k_\epsilon(a_{2} \otimes a_{4}).
$$

{\begin{rmk}\label{remark1}
Assume that all $a_1,\dots,a_n\in A_i$ are from the same subalgebra. Then our decoration is given by $d=(i,i,\dots,i)$ and the relevant set of $\epsilon$-noncrossing partitions ${\mathcal P}^{d,\epsilon}_n$ depends on $\epsilon$ only through our choice of $\epsilon_{ii}$. For $\epsilon_{ii}=1$ we have that ${\mathcal P}_n^{d,\epsilon}$ are just all partitions and the defining equation for $\epsilon$-cumulants reduces to the classical moment-cumulant relation; hence in this case the $\epsilon$-cumulants are the same as classical cumulants. For $\epsilon_{ii}=0$ we have that ${\mathcal P}^{d,\epsilon}_n$ are exactly the noncrossing partitions and the defining equation for $\epsilon$-cumulants reduces to the free moment-cumulant relation; hence in this case the $\epsilon$-cumulants are the same as free cumulants. For arguments from different subalgebras the situation gets of course more complicated; the $\epsilon$-cumulants will then depend also on the non-diagonal values of the $\epsilon_{ij}$ and are in general neither classical nor free cumulants.
\end{rmk}}


\section{The lattice of $\epsilon$-noncrossing partitions}
\label{sect:lattice}

Let us show first that $\epsilon$-noncrossing partitions are stable by taking subpartitions. The property may be clear intuitively, we simply indicate its proof.

\begin{lem}
Let $\Gamma$ be a $d$-decorated $\epsilon$-noncrossing partition and $i\leq n$. Consider the induced decorated partition of $[n]-\{i\}$ and write $\Gamma'$ for its standardization, with decoration $d'$ induced by the standardization of the restriction of $d$ to $[n]-\{i\}$. Then, $\Gamma'$ is a $d'$-decorated $\epsilon$-noncrossing partition.
\end{lem}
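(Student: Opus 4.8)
The plan is to transport a trivialization of $\Gamma$ into one of $\Gamma'=st\big(\Gamma_{|[n]-\{i\}}\big)$ by deleting the element $i$ throughout and restandardizing. Everything rests on two elementary observations about the effect of deleting a single element. \emph{First (allowed permutations descend):} if $\sigma$ is an allowed permutation of a decorated partition $\Lambda$ on $[m]$ and $p\in[m]$, then deleting $p$ turns $\sigma$ into an allowed permutation $\tilde\sigma$ of the restricted, standardized partition, with the compatibility $\tilde\sigma\big(st(\Lambda_{|[m]-\{p\}})\big)=st\big(\sigma(\Lambda)_{|[m]-\{\sigma(p)\}}\big)$. Indeed, writing $\sigma=\tau_{i_k}\cdots\tau_{i_1}$ as a sequence of allowed moves and following the current position of $p$, each move that swaps $p$ with a neighbour becomes the identity after deletion, while every other move swaps two surviving elements that remain adjacent after deletion (nothing lies strictly between two consecutive positions). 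Since deleting $p$ alters neither the decorations nor the same-block/different-block status of the surviving elements, the defining requirement $\epsilon_{d(\cdot)d(\cdot)}=1$ together with clause (a) or (b) is preserved; hence each descended move is again allowed and their composition realizes $\tilde\sigma$.

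\emph{Second (intervals survive deletion):} if a block $\bar\Lambda_m$ equals a connected subset $\{a,a+1,\dots,a+p\}$ and we delete one element of the ground set and restandardize, then the image of $\bar\Lambda_m$ minus the deleted element is again a connected subset --- if the deleted position lies outside $\{a,\dots,a+p\}$ the interval merely shifts, and if it lies inside, the resulting hole is closed up by the standardization, giving $\{a,\dots,a+p-1\}$. With these two facts I would follow the reductions trivializing $\Gamma$ while tracking the image of $i$. As long as the block removed by a reduction $(\sigma_t,j_t)$ does not contain the current image of $i$, its allowed permutation descends by the first observation and carries the interval block to an interval block by the second, so the very same reduction, read on the deleted-and-standardized partitions, is again valid; the two runs stay synchronized, differing only by the still-present element $i$. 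At the unique step whose block $B$ contains $i$: if $|\bar B|\ge 2$ the descended permutation carries $\bar B\setminus\{i\}$ to an interval (the interior case above), so the step descends to a genuine reduction, after which $i$ has disappeared and the two runs coincide; if $\bar B=\{i\}$ the block vanishes upon deleting $i$, so the step is simply omitted. In either case the remaining reductions descend verbatim and terminate at the empty partition, exhibiting a trivialization of $\Gamma'$.

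\textbf{The main obstacle} is the first observation: one must check carefully that, after reindexing the surviving transpositions to account for the removed position, they are still allowed moves and compose to the correct permutation of the smaller ground set. The interval/hole-closing observation and the synchronization of the two runs are then routine bookkeeping, which is why the statement is intuitively clear.
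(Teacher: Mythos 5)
Your proposal is correct and follows essentially the same route as the paper: track the deleted element through the sequence of allowed moves, observe that a move swapping it with a neighbour collapses to the identity while all other moves descend (after reindexing) to allowed moves of the restricted, standardized partition, and conclude that allowed permutations, reductions, and hence trivializations descend. Your write-up merely fills in details the paper leaves implicit (the interval-survival observation and the case analysis on the block containing $i$); the only cosmetic imprecision is that when the removed block is the singleton $\{i\}$ the step's descended permutation should still be applied (or absorbed into the next reduction) rather than omitted outright, a bookkeeping point at the same level of detail the paper itself glosses over.
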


\begin{proof}
Indeed, let $\tau_j$ be an allowed move for the decorated partition $\Gamma$. It maps the pair $(\Gamma,\{i\})$ to $(\tau_j(\Gamma),\{i\})$ if $j<i-1$ or $j>i$; to $(\tau_j(\Gamma),\{i-1\})$ if $j=i-1$; and to $(\tau_j(\Gamma),\{i+1\})$ if $j=i$.
It induces on $\Gamma'$:
\begin{itemize}
	\item the allowed move $\tau_j$ if $j<i-1$, 

	\item the identity map if $j=i-1$ or $i$, 

	\item the allowed move $\tau_{j-1}$ if $j>i$. 
\end{itemize}
By this process, an allowed permutation (respectively~reduction, respectively~trivialization) of $\Gamma$ induces an allowed permutation (respectively~reduction, respectively~trivialization) of $\Gamma'$ (the permutation maps $\Gamma'=st(\Gamma_{|[n]-\{i\}})$ to $st(\sigma(\Gamma)_{|[n]-\{\sigma(i)\}})$). The lemma follows.
\end{proof}

\begin{cor}
Using the same notation as in the foregoing lemma, for an arbitrary subset $X \subset [n]$, the standardization $\Gamma_X$ of the restriction of $\Gamma$ to $X$ is an $\epsilon$-noncrossing partition.
\end{cor}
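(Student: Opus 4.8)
The plan is to obtain the corollary as an immediate consequence of the preceding lemma by induction on the size of the complement $[n]\setminus X$. The lemma handles the removal of a single element: if $\Gamma$ is $\epsilon$-noncrossing on $[n]$ and we delete one index $i$, then the standardization of the restriction to $[n]\setminus\{i\}$ is again $\epsilon$-noncrossing (with the induced standardized decoration). Since $X$ is obtained from $[n]$ by deleting the finitely many elements of $[n]\setminus X$ one at a time, iterating the lemma should yield the claim.

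More precisely, I would first record that restriction-followed-by-standardization is compatible with composition: if one deletes $i$, standardizes to get a partition of $[n-1]$, and then deletes another element $j$ and standardizes again, the result coincides with the standardization of the restriction of $\Gamma$ to $X$ where $X=[n]\setminus\{i,j\}$ (up to the canonical order-preserving relabeling). This is the routine bookkeeping that makes the induction legitimate: the standardization map $st$ sends a subset of the integers to an initial segment $[m]$ in an order-preserving way, and two successive deletions-and-standardizations agree with a single standardization of the doubly-restricted partition because order-preserving bijections compose to order-preserving bijections. Granting this, I would set up the induction on $|[n]\setminus X|=:r$. The base case $r=0$ is trivial since then $X=[n]$ and $\Gamma_X=\Gamma$. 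For the inductive step, write $[n]\setminus X=\{i_1<\cdots<i_r\}$, delete $i_r$ to obtain by the lemma an $\epsilon$-noncrossing partition $\Gamma^{(1)}$ on $[n-1]$ whose induced decoration is the standardization of the restriction of $d$, and observe that $\Gamma_X$ is the standardization of the restriction of $\Gamma^{(1)}$ to the image of $X$, which has complement of size $r-1$; apply the induction hypothesis.

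The only genuine content beyond the lemma is verifying that iterated standardization equals single standardization, and I expect this to be the main obstacle insofar as anything here is: it is purely a matter of checking that the relabeling maps $\mu$ appearing in the definition of $st$ commute correctly with restriction. Since standardization is defined via the unique order-preserving bijection onto an initial segment, and the composite of two such bijections (on nested subsets) is again the order-preserving bijection onto the final initial segment, the induced decorations and the induced block structures match on the nose. I would state this compatibility as a one-line observation and then let the induction run, so that the corollary follows with no further computation.
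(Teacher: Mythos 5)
Your proof is correct and follows exactly the route the paper intends: the paper states this corollary without proof precisely because it is the immediate iteration of the single-element deletion lemma, which is what you carry out. Your added observation that successive restriction-and-standardization steps compose correctly (order-preserving bijections onto initial segments compose to order-preserving bijections) is the right piece of bookkeeping and is the only thing the paper leaves tacit.
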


For later use, if $\beta$ is an allowed permutation of $\Gamma$, then we write $\beta_X$ for the induced allowed permutation of $\Gamma_X$. Analogously, for a reduction $(\sigma, i)$ (respectively trivialization $\mathbf \sigma$) of $\Gamma$, the induced reduction (trivialization) is denoted $(\sigma_X,i)$ (respectively ${\mathbf \sigma}_X$). Notice that the induced reduction can be trivial, i.e., simplify to the identity map, if $\Gamma_i \subset [n]-X$.

Let us consider now a decoration $d$ and the set $\mathcal P^d_n$  of $d$-decorated partitions of $[n]$. The refinement of partitions defines a preorder. It is actually a lattice for which the meet $\Gamma\wedge\Gamma'$ is obtained by taking the various intersections of the blocks of the two partitions $\Gamma$ and $\Gamma'$. 

The subset ${\mathcal P}^{d,\epsilon}_{n}$ of $d$-decorated $\epsilon$-noncrossing partitions inherits from $\mathcal P_n^d$ a poset structure. It has a maximal element (the trivial $d$-decorated  partition $\hat{1}_n=[n]$) and a minimal $d$-decorated  element (the full partition, $\hat{0}_n=\{1\} \coprod \cdots \coprod \{n\}$). 

\begin{prop}
The meet in $\mathcal P^d_n$ is also the meet in $\mathcal P^{d,\epsilon}_n$. Moreover, $\mathcal P^{d,\epsilon}_n$ is a lattice.
\end{prop}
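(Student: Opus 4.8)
The plan is to reduce the whole statement to a single closure property. Write $\Theta:=\Gamma\wedge\Gamma'$ for the meet computed in $\mathcal P^d_n$, i.e.\ the partition whose blocks are the intersections $\bar\Gamma_a\cap\bar\Gamma'_b$. It suffices to prove that \emph{$\Theta$ is again $\epsilon$-noncrossing whenever $\Gamma,\Gamma'\in\mathcal P^{d,\epsilon}_n$}. Indeed, granting this, $\Theta$ is a lower bound of $\Gamma,\Gamma'$ lying in $\mathcal P^{d,\epsilon}_n$, and any $\epsilon$-noncrossing lower bound of $\Gamma,\Gamma'$ is a lower bound in $\mathcal P^d_n$ and hence $\le\Theta$; so $\Theta$ is the meet in $\mathcal P^{d,\epsilon}_n$, which is the first assertion. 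For the second, I would invoke the general fact that a finite poset with a greatest element in which every pair admits a meet is a lattice, joins being recovered as $\Gamma\vee\Gamma'=\bigwedge\{\Psi:\Psi\ge\Gamma,\ \Psi\ge\Gamma'\}$ (a nonempty meet, since $\hat 1_n$ belongs to the set). As $\hat 1_n=[n]$ is noncrossing, hence $\epsilon$-noncrossing, the lattice property follows from the closure property.

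The proof of closure rests on three elementary observations. (i) An allowed move is reversible: $\tau_i$ is an allowed move from $\tau_i(\Gamma)$ back to $\Gamma$, since $\epsilon_{d(i)d(i+1)}$ is symmetric and the block-membership of $\{i,i+1\}$ is unchanged by $\tau_i$; consequently $\mathcal P^{d,\epsilon}_n$ is stable under allowed permutations, i.e.\ $\Gamma$ and $\beta(\Gamma)$ are $\epsilon$-noncrossing together. (ii) Every allowed move of $\Gamma$ is an allowed move of any refinement $\Theta\le\Gamma$: refining can only create the distinct-block situation required by case (b), never destroy it, so allowed permutations of $\Gamma$ also act through allowed moves on $\Theta$. (iii) A move both of whose endpoints lie in one block is necessarily of type (a), and type-(a) allowedness depends only on the decoration, not on the partition. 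These, together with the Corollary that restrictions of $\epsilon$-noncrossing partitions are $\epsilon$-noncrossing, are the only tools.

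I would then prove closure by induction on $n$. As $\Gamma$ is reducible, an allowed permutation $\sigma$ gathers a block $B=\bar\Gamma_i$ into an interval $J=\sigma(B)$; by (ii) $\sigma$ is allowed for $\Theta$. Since $\Gamma_{|B}$ is the single block $B$, one has $\Theta_{|B}=\Gamma'_{|B}$, whence $st\big((\sigma(\Theta))_{|J}\big)=st(\sigma(\Gamma'_{|B}))$; by (iii) the moves $\sigma$ performs inside $B$ are of type (a), so this is an allowed-permutation image of the $\epsilon$-noncrossing partition $st(\Gamma'_{|B})$ and is therefore $\epsilon$-noncrossing by (i). Being $\epsilon$-noncrossing it is reducible, so finitely many further moves \emph{internal to} $J$ gather one of its blocks $D=\bar\Gamma_i\cap\bar\Gamma'_c$ into a subinterval; these moves are allowed for $\sigma(\Theta)$. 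Thus $\Theta$ is reducible with reducible block $D$, and removing $D$ yields a partition that I would identify with $st(\Gamma_{|[n]\setminus D})\wedge st(\Gamma'_{|[n]\setminus D})$. Both factors are $\epsilon$-noncrossing by the Corollary and live on $[\,n-|D|\,]$, so the induction hypothesis makes their meet $\epsilon$-noncrossing; hence $\Theta$ reduces to an $\epsilon$-noncrossing partition and is itself $\epsilon$-noncrossing.

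The step I expect to be the genuine obstacle is precisely this last identification of the reduced partition with the meet of the two restrictions \emph{in their natural order}. The gathering permutation $\rho\sigma$ may reorder the elements of $[n]\setminus D$ among themselves. For the $\Gamma$-factor this is harmless, because the gathered block remains an interval and the factor is visibly a restriction of the $\epsilon$-noncrossing partition $(\rho\sigma)(\Gamma)$; but for the $\Gamma'$-factor it is delicate, since a gathering of $B$ need not preserve the relative order of the complement, and a complement transposition that is allowed for $\Gamma$ may fail to be allowed for $\Gamma'$ (it could swap two elements sharing a $\Gamma'$-block). In fact a naive ``order-preserving gathering always exists'' claim is false in general. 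Closing the argument therefore demands a careful bookkeeping of \emph{which} transpositions of $[n]\setminus D$ the reduction is forced to carry out, showing that any of them reordering two elements of a common $\Gamma'$-block can be avoided, or is itself a type-(a) move and hence allowed for $\Gamma'$ as well. This combinatorial control of allowed moves, rather than the inductive skeleton above, is where the real work of the proposition lies.
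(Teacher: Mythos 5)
Your strategy is essentially the paper's: reduce the proposition to closure of $\mathcal P^{d,\epsilon}_n$ under the meet of $\mathcal P^d_n$ (plus the standard fact that a poset with a maximal element in which every pair has a meet is a lattice), then prove closure by gathering a block $B=\bar\Gamma_i$ of $\Gamma$ into an interval $J$ by an allowed permutation $\sigma$ --- allowed for $\Theta=\Gamma\wedge\Gamma'$ by refinement-inheritance --- and exploiting that inside $J$ the partition $\sigma(\Theta)$ is a permuted copy of the $\epsilon$-noncrossing partition $st(\Gamma'_{|B})$. The only structural difference is that the paper empties the whole interval at once, via the explicit trivialization ${\mathbf\beta}_{\Gamma_i}\circ{\sigma^{-1}}_{\sigma(\Gamma_i)}$ built from a trivialization $\mathbf\beta$ of $\Gamma'$, before invoking induction, whereas you remove a single sub-block $D=\bar\Gamma_i\cap\bar\Gamma'_c$ per induction step; both variants work.

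However, the ``genuine obstacle'' you announce in your last paragraph is not an obstacle, and by leaving it open you have left the proof incomplete at a point where your own tools already suffice. You are trying to show that the partition left after removing $D$ \emph{equals} $st(\Gamma_{|[n]\setminus D})\wedge st(\Gamma'_{|[n]\setminus D})$ in the natural order, and you rightly doubt that the gathering permutation $\rho\sigma$ preserves the order of the complement. But no such equality is needed. By the restriction lemma (an allowed permutation of $\Theta$ induces an allowed permutation of any restriction of $\Theta$, with $st\big((\rho\sigma)(\Theta)_{|[n]\setminus\rho\sigma(D)}\big)=(\rho\sigma)_{[n]\setminus D}\big(st(\Theta_{|[n]\setminus D})\big)$), the reduced partition is the image of $st(\Theta_{|[n]\setminus D})=st(\Gamma_{|[n]\setminus D})\wedge st(\Gamma'_{|[n]\setminus D})$ under an allowed permutation \emph{of that partition}. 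The induction hypothesis makes this meet $\epsilon$-noncrossing, and your observation (i) --- reversibility, hence invariance of $\epsilon$-noncrossingness under allowed permutations --- transfers this to its image; prepending the reduction $(\rho\sigma,D)$ then trivializes $\Theta$. In particular, your worry that a complement transposition ``allowed for $\Gamma$ may fail to be allowed for $\Gamma'$'' is a red herring: allowedness for $\Gamma'$ alone is never required anywhere; every move in the construction is allowed for the refinement $\Theta$, and that is all the restriction lemma asks. This is precisely what the paper's phrase ``up to an allowed permutation'' accomplishes, and with it your argument closes with no further combinatorial bookkeeping.
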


For notational simplicity, we will omit taking standardizations in the proof, when considering subpartitions.

\begin{proof}
Let $\Gamma = \Gamma_1\coprod \cdots \coprod \Gamma_k$ and $\Gamma' = \Gamma'_1 \coprod \cdots \coprod \Gamma'_l$ be two $\epsilon$-noncrossing partitions -- with respect to the same decoration $d$ -- and $(\sigma,i)$ be a reduction of $\Gamma$, and $\mathbf \beta$ be a trivialization of $\Gamma'$. The meet of $\Gamma$ and $\Gamma'$ in $\mathcal P^{d}_n$ is $\Gamma \wedge \Gamma' = (\Gamma_1 \cap \Gamma'_1) \coprod (\Gamma_1 \cap \Gamma'_2) \coprod \cdots \coprod (\Gamma_k \cap \Gamma'_l)$.

The allowed permutation $\sigma$ of $\Gamma$ is also an allowed permutation of $\Gamma \wedge \Gamma'$ (indeed, an allowed move $\tau_i$ for $\Gamma$ is such that $\epsilon_{d(i)d(i+1)}=1$ and either $d(i)\not= d(i+1)$ or $d(i)=d(i+1)$ and $i,i+1$ belong to two different blocks of $\Gamma$. These properties are inherited by $\Gamma\wedge \Gamma'$). The permutation $\sigma$ maps $\Gamma \wedge \Gamma'$ to the decorated partition with underlying partition
$$
	\overline{\sigma(\Gamma\wedge\Gamma')}=\overline{\sigma (\Gamma_1\cap\Gamma'_1)}\coprod
	\cdots\coprod \overline{\sigma (\Gamma_k\cap\Gamma'_l)},
$$
where $\overline{\sigma(\Gamma_i)}=\overline{\sigma(\Gamma_i\cap\Gamma'_1)}\coprod\cdots\coprod \overline{\sigma(\Gamma_i\cap\Gamma'_l)}$. Since $(\sigma, i)$ is a reduction for $\Gamma$, $\overline{\sigma(\Gamma_i)}$ is an interval, i.e., a connected subset of $[n]$. We finally get that ${\mathbf\beta}_{\Gamma_i}\circ{\sigma^{-1}}_{\sigma(\Gamma_i)}$ is a trivialization of the partition $\sigma(\Gamma_i\cap\Gamma'_1)\coprod \cdots \coprod \sigma(\Gamma_i\cap\Gamma'_l)$ of $\sigma(\Gamma_i)$. 

Up to an allowed permutation, we have shown that there exists a sequence of reductions from $\Gamma\wedge\Gamma'$ to $st(\Gamma_{|[n]-\Gamma_i})\wedge st(\Gamma'_{|[n]-\Gamma_i})$. The first part of the Proposition follows by induction on $[n]$. 

By standard results in lattice theory, a poset in which every pair of elements has a meet, and which has a maximal element is a lattice \cite[Prop. 3.3.1]{Stanley}. The statement in the proposition follows.
\end{proof}


\section{Values of $\epsilon$-cumulants.}
\label{sect:values}

Recall that, by definition of $\epsilon$-cumulants, and due to our conventions for units, we have $k_\epsilon (1_j)=k_\epsilon (1_A)=\phi(1_A)=1$.

\begin{lem}
Consider ${\mathbf a}=(a_1,\dots,a_n)$, $a_j\in A_{i_j}$. Then, for $n>1$, $k_\epsilon (a_1,\dots,a_n)=0$ if one of the $a_j$ is $1_{i_j}$.
\end{lem}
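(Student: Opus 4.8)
The statement I must prove is that for $n>1$, the $\epsilon$-cumulant $k_\epsilon(a_1,\dots,a_n)$ vanishes whenever one of the arguments equals the unit $1_{i_j}$ of its subalgebra. This is the standard ``unit reduction'' property, and the natural approach is induction on $n$ using the defining moment-cumulant relation

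$$
	\phi(a_1 \cdots a_n)=\sum\limits_{\Gamma \in {\mathcal P}^{d,\epsilon}_{n}}k_\epsilon^{\Gamma}({\mathbf a}).
$$

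The plan is to fix the position $j$ with $a_j=1_{i_j}$ and compare the moment-cumulant expansion for $(a_1,\dots,a_n)$ with that for the shorter sequence obtained by deleting $a_j$. Since $a_j=1$ is the unit, the left-hand side is unchanged: $\phi(a_1\cdots a_n)=\phi(a_1\cdots \widehat{a_j}\cdots a_n)$. So the heart of the matter is to show that the right-hand sums agree term by term after the inductive hypothesis removes the vanishing terms.

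First I would invoke the inductive hypothesis to kill, on the right-hand side, every $\epsilon$-noncrossing partition $\Gamma$ in which the singleton-free block containing $j$ also contains some other index: by the stability of $\epsilon$-noncrossing partitions under restriction (the Lemma and Corollary on subpartitions), the restriction of such a $\Gamma$ to a block $\Gamma_r\ni j$ of size at least two is itself $\epsilon$-noncrossing, and its cumulant factor $k_\epsilon(a_{\Gamma_r})$ contains $a_j=1$ among two or more arguments, hence vanishes by induction. Therefore only partitions $\Gamma$ in which $\{j\}$ is a singleton block survive. For such a $\Gamma$ the factor $k_\epsilon(a_j)=k_\epsilon(1_{i_j})=1$ contributes trivially, and removing the singleton block $\{j\}$ establishes a bijection $\Gamma\mapsto\Gamma'$ between surviving $\epsilon$-noncrossing partitions of $[n]$ and all $\epsilon$-noncrossing partitions of $[n]-\{j\}$ with matching cumulant products $k_\epsilon^{\Gamma}({\mathbf a})=k_\epsilon^{\Gamma'}(a_1,\dots,\widehat{a_j},\dots,a_n)$.

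The comparison then reads
$$
	\sum_{\Gamma \in {\mathcal P}^{d,\epsilon}_{n}}k_\epsilon^{\Gamma}({\mathbf a})
	= k_\epsilon(a_1,\dots,a_n) + \sum_{\Gamma' \in {\mathcal P}^{d',\epsilon}_{n-1}}k_\epsilon^{\Gamma'}(a_1,\dots,\widehat{a_j},\dots,a_n),
$$
where the isolated term $k_\epsilon(a_1,\dots,a_n)$ is exactly the contribution of the single top partition $\hat 1_n$, which is $\epsilon$-noncrossing and whose unique block contains all indices; all other surviving $\Gamma$ (those with $\{j\}$ a genuine singleton) are matched bijectively to the shorter expansion. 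Since the two moments on the left are equal and the shorter sum equals $\phi(a_1\cdots\widehat{a_j}\cdots a_n)$ by the moment-cumulant relation in dimension $n-1$, the isolated cumulant term must vanish, giving $k_\epsilon(a_1,\dots,a_n)=0$. The main obstacle, and the point requiring care, is verifying that $\hat 1_n$ is the only surviving partition whose block containing $j$ is not a singleton; one must check that \emph{every} $\epsilon$-noncrossing $\Gamma$ with $j$ lying in a block of size $\ge 2$ and $\Gamma\ne\hat 1_n$ is already annihilated by the inductive hypothesis applied to that block's restriction, which is precisely where the stability corollary is essential.
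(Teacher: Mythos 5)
Your proof is correct and follows essentially the same route as the paper's: induction on $n$ via the moment--cumulant relation, splitting the $\epsilon$-noncrossing partitions into the top partition, partitions where the unit's index sits in a larger block (killed by the induction hypothesis), and partitions where it is a singleton (matched bijectively with the expansion of the shorter moment). The only cosmetic difference is that you delete the unit from the $n$-tuple while the paper inserts a unit into an $n$-tuple to prove the statement at order $n+1$; also note the stability corollary is really needed for the singleton-block bijection, not for the vanishing of the larger-block factors, which follows directly from the induction hypothesis applied to $k_\epsilon(a_{\Gamma_r})$.
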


\begin{proof}
The proof runs by induction on $n$. For $n=2$, we have $k_\epsilon (a_1, 1_{i_2})+k_\epsilon (a_1)=k_\epsilon (a_1, 1_{i_2})+k_\epsilon (a_1)k_\epsilon (1_{i_2})=\phi(a_11_{i_2})=\phi(a_1)=k_\epsilon (a_1)$, and therefore $k_\epsilon (a_1,1_{i_2})=0$. Similarly, $k_\epsilon (1_{i_1},a_2)=0$.

In general, assuming that the property holds up to order $n$, we have, for an arbitrary $p\in I$, $\phi(a_1\cdots a_i1_pa_{i+1}\cdots a_n)=\phi(a_1\cdots a_n)$. The term on the righthand side can be expanded as a sum of cumulants $k_\epsilon^{\Gamma}({\mathbf a})$, where $\Gamma$ runs over $\epsilon$-noncrossing partitions associated to the decoration $(d(1), \dots,d(n))=(i_1,\dots,i_n)$.  The term on the lefthand side can be expanded as a sum of cumulants $k_\epsilon^{\Gamma}({\mathbf a})$, where $\Gamma$ runs over $\epsilon$-noncrossing partitions associated to the decoration $(d(1),\dots,d(i),p,d(i+1), \dots,d(n))$. These partitions split into three groups:
\begin{itemize}
	\item The maximal partition $\hat1_{n+1}=[n+1]$, corresponding to the cumulant $k_\epsilon (a_1\otimes \cdots \otimes a_i  \otimes 1_p  \otimes a_{i+1} \otimes \cdots  \otimes a_n)=k_\epsilon (a_1, \ldots,1_p,\ldots  a_n)$.
	
	\item Partitions $\Gamma\not= \hat1_{n+1}$ where $i+1$ (the index of $1_p$ in the sequence $(a_1,\dots , a_i,1_p,a_{i+1},\dots , a_n)$) is contained in a block with at least two elements. By the induction hypothesis, the corresponding cumulants $k_\epsilon^{\Gamma}(a_1,\dots , a_i,1_p,a_{i+1},\dots , a_n)$ are equal to zero.

	\item Partitions $\Gamma$ where $\{i+1\}$ is a singleton block. Those partitons are in bijection with the $\epsilon$-noncrossing partitions associated to the decoration $(d(1), \ldots,d(n))$, and we have that $k_\epsilon^\Gamma(a_1,\ldots , a_i, 1_p , a_{i+1}, \ldots , a_n)=k_\epsilon^{\Gamma_{[n]-\{i+1\}}}(a_1,\dots , a_n)$.
\end{itemize}
The statement of the lemma follows.
\end{proof}

Consider now the following problem: let ${\mathbf a}=(a_1,\dots ,a_n)\in A_{i_1}\times\dots \times A_{i_n}$  with associated decoration map $\tilde d(j)=i_j$. Choose a sequence of integers $1\leq p_1< \dots < p_{m-1}< n$ such that $i_1=\cdots =i_{p_1},\ldots , i_{p_{m-1}+1}=\cdots =i_n$. Define now ${\mathbf b}=(b_1,\ldots ,b_m)$ by $b_j:=a_{i_{p_{j-1}+1}}\cdots a_{i_{p_j}}$, so that $b_j\in A_{i_{p_j}}$ (with $p_0:=0$, $p_m:=n$). We want to express the cumulant $k_\epsilon (b_1,\dots, b_m)$ in terms of cumulants $k_\epsilon^\Gamma ({\mathbf a}).$

Let $s$ be the surjection from $[n]$ to $[m]$ defined by: $s(1)=\dots =s(p_1):=1,\dots ,s(p_{m-1}+1)=\dots =s(n):=m$.

\begin{lem}\label{lem5}
Let $\Gamma=\Gamma_1\coprod \cdots \coprod\Gamma_k$ be a $d$-decorated $\epsilon$-noncrossing partition of $[m]$. Define $\tilde\Gamma$ as the decorated partition of $[n]$ obtained as the inverse image of $\Gamma$ under the surjection $s$, with decoration $\tilde d$, where $\tilde d(i):=d\circ s(i)$. Then, $\tilde\Gamma$ is an $\epsilon$-noncrossing partition.
\end{lem}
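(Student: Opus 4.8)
The plan is to manufacture a trivialization of $\tilde\Gamma$ directly from a trivialization of $\Gamma$, by lifting each allowed move and each reduction from the base $[m]$ to $[n]$, and to close the argument by induction on the number of blocks $k$ of $\Gamma$. The structural feature that makes the lift possible is the following: each fibre $I_j:=s^{-1}(j)=\{p_{j-1}+1,\dots,p_j\}$ is an \emph{interval} of $[n]$, every element of which carries the constant decoration $\tilde d\equiv d(j)$ and lies in one and the same block of $\tilde\Gamma$ (namely $s^{-1}(\bar\Gamma_a)$ for the $a$ with $j\in\bar\Gamma_a$). Thus $\tilde\Gamma$ is obtained from $\Gamma$ by replacing each point $j$ with the interval $I_j$; I will call any decorated partition arising this way a \emph{blow-up} of $\Gamma$, and the induction will keep me inside this class.

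\textbf{Lifting an allowed move (the key step).} Suppose $\tau_j=(j,j+1)$ is an allowed move for $\Gamma$. On the blown-up side it should correspond to interchanging the two adjacent intervals $I_j$ and $I_{j+1}$ while preserving the internal order of each; call the resulting permutation $\tilde\tau_j$. I would show that $\tilde\tau_j$ is an allowed permutation of $\tilde\Gamma$ and that $\tilde\tau_j(\tilde\Gamma)$ is again a blow-up, this time of $\tau_j(\Gamma)$. The crucial observation is that the block-interchange permutation has exactly $|I_j|\cdot|I_{j+1}|$ inversions, all of the form (element of $I_j$, element of $I_{j+1}$); consequently every adjacent transposition occurring in a reduced word for $\tilde\tau_j$ swaps an element of $I_j$ with an element of $I_{j+1}$, and never two elements of the same fibre. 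Each such elementary swap is an allowed move for the current partially interchanged partition: its two entries have decorations $d(j)$ and $d(j+1)$ with $\epsilon_{d(j)d(j+1)}=1$; if $d(j)\neq d(j+1)$ condition (a) holds, while if $d(j)=d(j+1)$ then $j,j+1$ lie in different blocks of $\Gamma$, hence the two entries lie in different blocks, and since blocks travel with their elements they remain in different blocks throughout the interchange, so condition (b) holds. Composing the lifts of successive allowed moves, any allowed permutation $\sigma$ of $\Gamma$ lifts to an allowed permutation $\tilde\sigma$ of $\tilde\Gamma$ carrying $\tilde\Gamma$ to a blow-up of $\sigma(\Gamma)$.

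\textbf{Lifting a reduction and the induction.} Fix a trivialization $(\sigma_l,i_l)\circ\cdots\circ(\sigma_1,i_1)$ of $\Gamma$ and apply the lift $\tilde\sigma_1$. Since $\sigma_1(\bar\Gamma_{i_1})$ is a connected subset $\{c,\dots,c+p\}$ of $[m]$, its blow-up is $I_c\cup\cdots\cup I_{c+p}$, a union of consecutive intervals, hence a connected subset of $[n]$. Therefore $\tilde\sigma_1$ together with the corresponding index is a reduction of $\tilde\Gamma$ that deletes the fibre-block over $\bar\Gamma_{i_1}$, and the standardization of what remains is precisely the blow-up of $(\sigma_1,i_1)(\Gamma)$. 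As the latter is $\epsilon$-noncrossing (being the next partition in the chosen trivialization of $\Gamma$) and has $k-1$ blocks, the induction hypothesis yields that its blow-up is $\epsilon$-noncrossing; prepending the reduction just constructed produces a trivialization of $\tilde\Gamma$. The base case $k=0$ (the empty partition) is immediate.

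\textbf{The hard part.} The only genuinely non-formal point is the lift of an allowed move: one must be certain that interchanging two adjacent fibres can be realized using allowed moves \emph{only}, i.e.\ that no forbidden swap of two equally decorated elements sitting in the same block is ever forced. This is exactly what the inversion/reduced-word observation secures, since all inversions of the interchange permutation run across the two distinct fibres. The remaining ingredients---connectivity of the blown-up block, compatibility of standardization with the blow-up operation, and the inductive step itself---are routine book-keeping that I would only sketch.
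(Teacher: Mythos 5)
Your proposal is correct and follows essentially the same route as the paper: lift each allowed move $\tau_i$ of $\Gamma$ to the permutation $\tilde\tau_i$ interchanging the adjacent fibres $s^{-1}(\{i\})$ and $s^{-1}(\{i+1\})$, check that this interchange is an allowed permutation of $\tilde\Gamma$ because every elementary swap involved crosses the two fibres (where $\epsilon_{\tilde d(a)\tilde d(b)}=1$, and equal decorations force distinct blocks), and conclude that trivializations of $\Gamma$ induce trivializations of $\tilde\Gamma$. Your inversion/reduced-word argument and the explicit induction on the number of blocks merely flesh out details the paper leaves implicit, so there is nothing to correct.
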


\begin{proof}
Indeed, let $\tau_i$ be an allowed move for $\Gamma$. Then, since $\epsilon_{d(i)d(i+1)}=1$, we have $\epsilon_{\tilde d(a)\tilde d(b)} = 1$ for any $a$ in $s^{-1}(\{i\})$, $b$ in $s^{-1}(\{i+1\})$. When furthermore $d(i)=d(i+1)$, $i$ and $i+1$ belong to different blocks of $\Gamma$ and the same property holds for $a$, $b$ and $\tilde\Gamma$. It follows that the permutation $\tilde\tau_i$ in $S_n$ switching $\{p_{i-1}+1,\dots ,p_{i}\}$ and $\{p_{i}+1,\dots ,p_{{i+1}}\}$ is an allowed permutation of $\tilde\Gamma$. Therefore, an allowed permutation, respectively a trivialization of $\Gamma$ induces an allowed permutation, respectively a trivialization of $\tilde\Gamma$. The Lemma follows.
\end{proof}

\begin{lem}
The map $s^{-1}:\Gamma\mapsto \tilde\Gamma$ defines a bijection between the lattice ${\mathcal P}^{d,\epsilon}_m$ of $d$-decorated $\epsilon$-noncrossing partitions of $[m]$ and the interval sublattice $[\tilde {0}_m,\tilde { 1}_m]$ of the lattice ${\mathcal P}^{\tilde{d},\epsilon}_{n}$ of $\tilde d$-decorated $\epsilon$-noncrossing partitions of $[n]$, where we set $\tilde{0}_m:=\tilde{\hat 0}_m$ and $\tilde{1}_m:=\tilde{\hat 1}_m = \hat{1}_n$.
\end{lem}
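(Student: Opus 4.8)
The plan is to exhibit $s^{-1}$ together with the pushforward $\Theta\mapsto s(\Theta)$ as mutually inverse, order-preserving maps between $\mathcal P^{d,\epsilon}_m$ and the interval $[\tilde 0_m,\tilde 1_m]$. First I would dispatch the easy halves. By Lemma~\ref{lem5} the map $s^{-1}$ sends $\epsilon$-noncrossing partitions to $\epsilon$-noncrossing partitions; since $s^{-1}$ visibly preserves refinement and $\hat 0_m\leq\Gamma\leq\hat 1_m$, its image lies in $[s^{-1}(\hat 0_m),s^{-1}(\hat 1_m)]=[\tilde 0_m,\tilde 1_m]$. Injectivity is immediate because $s$ is surjective, so applying $s$ blockwise recovers $\Gamma=s(\tilde\Gamma)$. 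For surjectivity onto the interval, note that any $\Theta\in[\tilde 0_m,\tilde 1_m]$ satisfies $\Theta\geq\tilde 0_m$, i.e. each fiber $s^{-1}(\{j\})$ is contained in a single block of $\Theta$. Since $\tilde d=d\circ s$ is constant on fibers, the pushforward $\Gamma:=s(\Theta)$ is a well-defined $d$-decorated partition of $[m]$, and a short set-theoretic check using $\Theta\geq\tilde 0_m$ gives $s^{-1}(s(\Theta))=\Theta$.

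The one substantial point, and the main obstacle, is the converse of Lemma~\ref{lem5}: I must show that if $\Theta=s^{-1}(\Gamma)$ is $\epsilon$-noncrossing then $\Gamma$ itself is $\epsilon$-noncrossing. The naive strategy of projecting a trivialization of $\Theta$ down to $[m]$ does not work directly, because an allowed move on $[n]$ can slide a foreign element between two elements of the same fiber; the fibers need not stay contiguous, and a trivialization of $\Theta$ need not descend blockwise to one of $\Gamma$.

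To avoid this I would argue by induction on the excess $e:=n-m=\sum_j\big(|s^{-1}(\{j\})|-1\big)$, keeping $\Gamma$ fixed and letting the surjection $s$ (equivalently, the fiber sizes) vary. If $e=0$ then $s$ is a monotone bijection, hence the identity, and $\Theta=\Gamma$, so there is nothing to prove. If $e\geq 1$, pick a fiber $s^{-1}(\{j\})$ with at least two elements and two consecutive labels $a,a+1$ inside it. Restricting $\Theta$ to $[n]-\{a+1\}$ and standardizing yields, by the Corollary in Section~\ref{sect:lattice}, an $\epsilon$-noncrossing partition $\Theta'$; and because $a$ and $a+1$ lie in the same block and carry the same decoration, $\Theta'$ is exactly $(s^-)^{-1}(\Gamma)$ for the monotone surjection $s^-\colon[n-1]\to[m]$ obtained from $s$ by shrinking the $j$-th fiber by one. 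Its excess is $e-1$, so the induction hypothesis forces $\Gamma$ to be $\epsilon$-noncrossing. This inductive reduction, which trades the delicate fiber-tracking for a single application of the restriction corollary, is the crux; everything else is bookkeeping.

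Finally I would assemble the pieces. The maps $s^{-1}$ and $\Theta\mapsto s(\Theta)$ are mutually inverse and both preserve refinement, so $s^{-1}$ is an order-isomorphism of $\mathcal P^{d,\epsilon}_m$ onto $[\tilde 0_m,\tilde 1_m]$. As an interval in the lattice $\mathcal P^{\tilde d,\epsilon}_n$ the target is automatically a sublattice, and an order-isomorphism between lattices is a lattice isomorphism; this gives the asserted bijection, with $\tilde 0_m=\tilde{\hat 0}_m$ and $\tilde 1_m=\tilde{\hat 1}_m=\hat 1_n$.
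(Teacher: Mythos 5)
Your proof is correct and follows essentially the same route as the paper: in both, the crux (surjectivity onto the interval) is that $\Gamma$ is recovered as the standardized restriction of $\Phi=s^{-1}(\Gamma)$ to a transversal of the fibers of $s$, so that the restriction corollary of Section \ref{sect:lattice} forces $\Gamma$ to be $\epsilon$-noncrossing. The only difference is bookkeeping: the paper restricts in one step to $X=\{1,p_1+1,\dots,p_{m-1}+1\}$ and invokes the induced trivialization ${\mathbf \sigma}_X$, whereas you delete the redundant fiber elements one at a time by induction on $n-m$, effectively inlining the proof of that corollary.
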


\begin{proof}In view of Lemma \ref{lem5},
the Lemma follows if we prove that all $\tilde d$-decorated $\epsilon$-noncrossing partitions $\Phi$ of $[n]$ with $\Phi\geq \tilde 0_m$ are of the form $\tilde \Gamma$. However, since $\Phi \geq \tilde 0_m$, $\Phi=s^{-1}(\Gamma)$ for a certain $d$-decorated partition $\Gamma$ of $[m]$. Let us show that $\Gamma$ is $\epsilon$-noncrossing.

Let, for that purpose, $\mathbf \sigma$ be a trivialization of $\Phi$. Set $X:=\{1,p_1+1,\dots ,p_{m-1}+1\}$. Then $X$ is in bijection with $[m]$ and $\Phi_X=\Gamma$ as a $d$-decorated partition. The Lemma follows since $\mathbf \sigma_X$ is a trivialization of $\Phi_X$.
\end{proof}

\begin{cor}
Let $f_m$ and $g_m$ be functions on ${\mathcal P}^{d,\epsilon}_m$ related by 
$$
	f_m(\Gamma)=\sum\limits_{\Phi \in {\mathcal P}^{d,\epsilon}_m \atop \Phi \leq \Gamma} g_m(\Phi)
$$
and let $f_n,g_n$ be two functions on ${\mathcal P}^{\tilde{d},\epsilon}_{n}$ related by
$$
	f_n(\Gamma)=\sum\limits_{\Phi\in{\mathcal P}^{\tilde{d},\epsilon}_{n} \atop \Phi \leq \Gamma} g_n(\Phi).
$$
Then, the following statements are equivalent:
\begin{itemize}

\item We have, for all $\Gamma\in{\mathcal P}^{d,\epsilon}_m$ that $f_m(\Gamma)=f_n(\tilde\Gamma),$

\item We have, for all $\Gamma\in{\mathcal P}^{d,\epsilon}_m$ that $g_m(\Gamma)=\sum\limits_{\Phi\in{\mathcal P}^{\tilde{d},\epsilon}_n \atop \Phi\vee\tilde{ 0}_m = \tilde\Gamma} g_n(\Phi).$

\end{itemize}
\end{cor}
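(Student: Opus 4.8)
The plan is to reduce both implications to the order isomorphism $\Gamma\mapsto\tilde\Gamma$ from ${\mathcal P}^{d,\epsilon}_m$ onto the interval $[\tilde 0_m,\tilde 1_m]$ supplied by the previous Lemma, using crucially that $\tilde 1_m=\hat 1_n$ is the global maximum of ${\mathcal P}^{\tilde d,\epsilon}_n$. The only genuinely combinatorial input is a fibre decomposition of down-sets; everything else is formal Möbius inversion.

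First I would record two facts. Since $\Phi\vee\tilde 0_m\geq\tilde 0_m$ and $\Phi\vee\tilde 0_m\leq\hat 1_n=\tilde 1_m$, the join $\Phi\vee\tilde 0_m$ (taken in the lattice ${\mathcal P}^{\tilde d,\epsilon}_n$) lies in $[\tilde 0_m,\tilde 1_m]$, hence equals $\tilde\Psi$ for a unique $\Psi\in{\mathcal P}^{d,\epsilon}_m$; thus the condition $\Phi\vee\tilde 0_m=\tilde\Gamma$ sorts ${\mathcal P}^{\tilde d,\epsilon}_n$ into fibres indexed by $\Gamma\in{\mathcal P}^{d,\epsilon}_m$. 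Second, for fixed $\Gamma$ one has $\Phi\leq\tilde\Gamma$ if and only if $\Phi\vee\tilde 0_m\leq\tilde\Gamma$ (the forward direction uses $\tilde 0_m\leq\tilde\Gamma$, the backward one $\Phi\leq\Phi\vee\tilde 0_m$). Combining these with the order isomorphism shows that the down-set $\{\Phi\in{\mathcal P}^{\tilde d,\epsilon}_n:\Phi\leq\tilde\Gamma\}$ is the disjoint union, over $\Theta\leq\Gamma$ in ${\mathcal P}^{d,\epsilon}_m$, of the fibres $\{\Phi:\Phi\vee\tilde 0_m=\tilde\Theta\}$.

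Next I would prove the implication from the second displayed assertion to the first by reindexing a single sum. Starting from $f_n(\tilde\Gamma)=\sum_{\Phi\leq\tilde\Gamma}g_n(\Phi)$ and splitting the index set along the fibre decomposition just described, the inner sum over each fibre is $\sum_{\Phi\vee\tilde 0_m=\tilde\Theta}g_n(\Phi)=g_m(\Theta)$ by the second assertion, whence $f_n(\tilde\Gamma)=\sum_{\Theta\leq\Gamma}g_m(\Theta)=f_m(\Gamma)$, which is the first assertion. No Möbius function is needed here, only the disjointness and exhaustiveness established above.

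For the reverse implication I would appeal to uniqueness of Möbius inversion on the finite poset ${\mathcal P}^{d,\epsilon}_m$. Assuming the first assertion, define $g_m'(\Gamma):=\sum_{\Phi\vee\tilde 0_m=\tilde\Gamma}g_n(\Phi)$; applying the implication just proved to the pair $(g_m',g_n)$ gives $\sum_{\Theta\leq\Gamma}g_m'(\Theta)=f_n(\tilde\Gamma)=f_m(\Gamma)=\sum_{\Theta\leq\Gamma}g_m(\Theta)$ for all $\Gamma$. Since the summation map $h\mapsto\big(\Gamma\mapsto\sum_{\Theta\leq\Gamma}h(\Theta)\big)$ is invertible on functions on a finite poset, $g_m'=g_m$, which is the second assertion. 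The main obstacle is the first paragraph: one must verify that $\Phi\vee\tilde 0_m$ always lands in the image interval and that the fibres exhaust the down-set below $\tilde\Gamma$; this is exactly where the hypotheses that $\tilde 1_m$ is the global top and that $s^{-1}$ is a lattice isomorphism onto $[\tilde 0_m,\tilde 1_m]$ enter, and everything downstream is routine incidence-algebra bookkeeping.
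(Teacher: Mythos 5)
Your proof is correct, but it takes a different route from the paper only in the sense that the paper gives no argument at all: its entire ``proof'' is a citation of Theorem~3.4 of \cite{speicher_2000}, which is exactly this statement for a general finite lattice containing a distinguished element $\tilde 0_m$ such that the relevant sublattice is the interval $[\tilde 0_m,\hat 1_n]$. What you have written is, in effect, a self-contained reconstruction of the proof of that cited theorem, specialized to ${\mathcal P}^{d,\epsilon}_m$ and ${\mathcal P}^{\tilde d,\epsilon}_n$: the fibre decomposition $\{\Phi\in{\mathcal P}^{\tilde d,\epsilon}_n:\Phi\leq\tilde\Gamma\}=\coprod_{\Theta\leq\Gamma}\{\Phi:\Phi\vee\tilde 0_m=\tilde\Theta\}$ (valid because $\hat 1_n=\tilde 1_m$ forces every join $\Phi\vee\tilde 0_m$ into the interval, and because $\Phi\leq\tilde\Gamma$ iff $\Phi\vee\tilde 0_m\leq\tilde\Gamma$), then one direction by reindexing the sum along the fibres, and the converse by uniqueness of the inverse of the zeta transform on a finite poset. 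Two small points you implicitly use and could make explicit: first, the join $\Phi\vee\tilde 0_m$ must be taken in the lattice ${\mathcal P}^{\tilde d,\epsilon}_n$ (whose existence is the Proposition of Section~4), not in the full partition lattice, since the two joins need not agree; second, the previous Lemma literally asserts only a bijection $\Gamma\mapsto\tilde\Gamma$, whereas you need it to be an order isomorphism onto $[\tilde 0_m,\tilde 1_m]$ --- this is immediate because taking preimages under the surjection $s$ preserves and reflects refinement, but it is an extra observation. What the paper's citation buys is brevity and placement of the result in a known general framework; what your argument buys is independence from the reference, and it makes visible that the real content is precisely the verification that the hypotheses of that general theorem (lattice structure, common top element, interval image) hold in the $\epsilon$-noncrossing setting.
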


The Corollary follows from \cite[Thm.~3.4]{speicher_2000}.

Applying these results to the functions $f_m(\Gamma):=\phi^\Gamma(\mathbf b)$ on ${\mathcal P}^{d,\epsilon}_m$, respectively $f_n(\Gamma):=\phi^\Gamma(\mathbf a)$ on ${\mathcal P}^{\tilde{d},\epsilon}_{n} $ (hence the $g$'s are the $\epsilon$-cumulants) and noticing that $f_m(\Gamma)=\phi^\Gamma({\mathbf b})=\phi^{\tilde\Gamma}(\mathbf a)=f_n(\tilde\Gamma)$, we get the next result. 

\begin{thm}\label{T8}
With the above notations, let $\Gamma$ be a $d$-decorated partition, then the following equation holds:
$$
	k_\epsilon^\Gamma(b_1,\dots,b_m)=\sum\limits_{\Phi\in{\mathcal P}^{\tilde{d},\epsilon}_{n}  \atop  
	\Phi\vee \tilde{ 0}_m=\tilde \Gamma}k_\epsilon^\Phi(a_1,\dots ,a_n).
$$
\end{thm}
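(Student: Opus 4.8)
The plan is to obtain the identity directly from the preceding Corollary, which is the Möbius-inversion statement linking the lattices $\mathcal P^{d,\epsilon}_m$ and $\mathcal P^{\tilde d,\epsilon}_n$. Concretely, I would put $f_m(\Gamma):=\phi^\Gamma(\mathbf b)$ and $g_m(\Gamma):=k_\epsilon^\Gamma(\mathbf b)$ on $\mathcal P^{d,\epsilon}_m$, and $f_n(\Gamma):=\phi^\Gamma(\mathbf a)$, $g_n(\Gamma):=k_\epsilon^\Gamma(\mathbf a)$ on $\mathcal P^{\tilde d,\epsilon}_n$. Once I verify that each pair satisfies the defining summation relation of the Corollary and that $f_m(\Gamma)=f_n(\tilde\Gamma)$, the first bullet of the Corollary holds, hence so does the second, which reads precisely
\[
k_\epsilon^\Gamma(b_1,\dots,b_m)=\sum_{\Phi\in\mathcal P^{\tilde d,\epsilon}_n,\ \Phi\vee\tilde 0_m=\tilde\Gamma}k_\epsilon^\Phi(a_1,\dots,a_n),
\]
i.e.\ the assertion of the theorem.

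The first point to verify is that $g_m=k_\epsilon^\bullet(\mathbf b)$ really is the Möbius inverse of $f_m=\phi^\bullet(\mathbf b)$, that is, that the multiplicative moment-cumulant relation
\[
\phi^\Gamma(\mathbf b)=\sum_{\Phi\in\mathcal P^{d,\epsilon}_m,\ \Phi\leq\Gamma}k_\epsilon^\Phi(\mathbf b)
\]
holds for every $\Gamma\in\mathcal P^{d,\epsilon}_m$ (and likewise for $\mathbf a$ on $[n]$). Since $\phi^\Gamma(\mathbf b)=\prod_i\phi(b_{\Gamma_i})$, I would apply the defining relation of $k_\epsilon$ to each block $\Gamma_i$ (after standardization) and expand the product, obtaining a sum over tuples $(\Psi_i)_i$ of $\epsilon$-noncrossing partitions of the individual blocks. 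Each tuple assembles into a single partition $\Phi=\coprod_i\Psi_i\leq\Gamma$ with $k_\epsilon^\Phi(\mathbf b)=\prod_i k_\epsilon^{\Psi_i}(b_{\Gamma_i})$, so the claim reduces to the bijection
\[
\{\Phi\in\mathcal P^{d,\epsilon}_m:\Phi\leq\Gamma\}\ \longleftrightarrow\ \prod_i\{\Psi_i:\Psi_i\ \epsilon\text{-noncrossing on }\Gamma_i\}.
\]
The forward direction is immediate from the Corollary on subpartitions: the restriction of an $\epsilon$-noncrossing $\Phi$ to each block $\Gamma_i$ is again $\epsilon$-noncrossing.

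For the reverse direction I expect the main obstacle: given $\epsilon$-noncrossing $\Psi_i$ on each block, I must produce a trivialization of $\Phi=\coprod_i\Psi_i$. I would argue exactly as in the proof of the Proposition on meets. Since $\Gamma$ is $\epsilon$-noncrossing it admits a reducing allowed permutation $\sigma$, and because $i,i+1$ lying in different blocks of $\Gamma$ forces the same for the finer partition $\Phi\leq\Gamma$ while the decoration conditions are identical for $\Phi$ and $\Gamma$, every allowed move for $\Gamma$ (at each stage of the reduction) is allowed for $\Phi$; thus $\sigma$ carries some $\bar\Gamma_i$ to a connected interval while remaining allowed on $\Phi$. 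Trivializing $\Psi_i$ inside that interval and then removing it reduces $\Phi$ to the analogous gluing problem over $[m]-\Gamma_i$, and an induction on the number of blocks of $\Gamma$ closes the argument. This interplay between a trivialization of the coarse partition and the internal trivializations of the blocks is the only genuinely nontrivial step; everything else is bookkeeping.

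Finally I would check $f_m(\Gamma)=f_n(\tilde\Gamma)$, i.e.\ $\phi^\Gamma(\mathbf b)=\phi^{\tilde\Gamma}(\mathbf a)$, blockwise. For a block $\Gamma_i=\{j_1<\dots<j_r\}$ of $\Gamma$ the corresponding block of $\tilde\Gamma=s^{-1}(\Gamma)$ is $\tilde\Gamma_i=s^{-1}(\Gamma_i)$, and since each $b_{j_t}=a_{p_{j_t-1}+1}\cdots a_{p_{j_t}}$ is the ordered product of consecutive $a$'s with the index sets $s^{-1}(j_1),\dots,s^{-1}(j_r)$ occurring in increasing order, the product $b_{j_1}\cdots b_{j_r}$ equals the ordered product of all $a$'s indexed by $\tilde\Gamma_i$; hence $\phi(b_{\Gamma_i})=\phi(a_{\tilde\Gamma_i})$ and, multiplying over blocks, $\phi^\Gamma(\mathbf b)=\phi^{\tilde\Gamma}(\mathbf a)$. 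With both hypotheses of the Corollary verified, its equivalence yields the stated formula.
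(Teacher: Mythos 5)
Your proposal is correct and takes essentially the same route as the paper: the paper's entire proof of Theorem~\ref{T8} consists of applying the preceding Corollary to $f_m(\Gamma)=\phi^\Gamma(\mathbf b)$ and $f_n(\Gamma)=\phi^\Gamma(\mathbf a)$ (so that the $g$'s are the $\epsilon$-cumulants) and noticing that $\phi^\Gamma(\mathbf b)=\phi^{\tilde\Gamma}(\mathbf a)$. The extra work you do --- checking that $k_\epsilon^\bullet$ is really the M\"obius inverse of $\phi^\bullet$ on \emph{every} $\Gamma$, via the block-factorization of $\epsilon$-noncrossing partitions below an $\epsilon$-noncrossing $\Gamma$ --- is a hypothesis the paper leaves implicit, and your sketch of it (mirroring the gluing argument in the Proposition on meets) is sound.
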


For further use, we will say that $\Phi$ with $\Phi\vee \tilde{0}_m=\tilde \Gamma$ couples two blocks of $ \tilde{ 0}_m$ if and only if they are subsets of the same block of $\tilde\Gamma$.


\section{Vanishing of cumulants and independence}
\label{sect:vanishing}

This section is dedicated to the proof of the main theorem, which states that $\epsilon$-independence is equivalent to the vanishing of mixed cumulants.

\begin{thm}\label{mainT}
Let $(A,\phi)$ be a noncommutative probability space, $I$ a set of indices, $\epsilon_{ij}=\epsilon_{ji}\in\{0,1\}$, and $(A_i)_{i\in I}$ unital subalgebras with the same unit as $A$; 
{and such that algebras $A_i$ and $A_j$, for $i\not= j$, commute whenever $\epsilon_{ij}=1$.} 
Then, the following statements are equivalent:
\begin{itemize}
	\item {\rm{(i)}} The $A_i$ are $\epsilon$-independent.

	\item {\rm{(ii)}} For all $n\geq 2$ and $a_j\in A_{i_j}$ we have $k_\epsilon (a_1,\dots,a_n)=0$, 
	whenever there exist $1\leq l<k\leq n$ with $i_l\not= i_k$.
\end{itemize}
\end{thm}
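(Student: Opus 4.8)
The plan is to prove both implications around a shared combinatorial core. Since the theorem's hypothesis already grants that $A_i$ and $A_j$ commute whenever $\epsilon_{ij}=1$, condition i) of Definition \ref{epsilon-independence} is automatic, so proving (i) will only require verifying the centered-moment condition ii). The engine of both directions is the following \emph{monochromatic obstruction}: if the decoration $d=(i_1,\dots,i_n)$ lies in $I_n^\epsilon$, then no $\epsilon$-noncrossing $d$-decorated partition can have all of its blocks monochromatic (constant decoration on each block) and of size at least two. I would prove this directly from the definition of allowed moves. In such a partition, pick a block $B$ of color $c$ and two elements $x<y$ of $B$ that are consecutive within $B$; membership $d\in I_n^\epsilon$ forces an index $x<p<y$ with $i_p\neq c$ and $\epsilon_{i_p c}=0$. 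An allowed move can swap position $p$ with an adjacent $c$-colored element only when $\epsilon_{i_p c}=1$, which is excluded; hence the number of $B$-elements lying to the left of $p$ never changes, so $p$ stays trapped strictly between two elements of $B$ and $B$ can never be turned into an interval. Thus $B$ is never the object of a reduction, and since this holds for every block the partition admits no reduction at all, contradicting $\epsilon$-noncrossingness.

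For (ii) $\Rightarrow$ (i) I would take $(i_1,\dots,i_n)\in I_n^\epsilon$ and $a_k\in A_{i_k}$ with $\phi(a_k)=0$, and expand $\phi(a_1\cdots a_n)=\sum_{\Gamma\in\mathcal P^{d,\epsilon}_n}k_\epsilon^\Gamma(\mathbf a)$. A product $k_\epsilon^\Gamma(\mathbf a)=\prod_j k_\epsilon(a_{\Gamma_j})$ can be nonzero only if every block is monochromatic, since otherwise that factor is a mixed cumulant and vanishes by (ii), and only if there is no singleton block, since $k_\epsilon(a_k)=\phi(a_k)=0$. The monochromatic obstruction shows no such $\Gamma$ exists, so the sum is empty and $\phi(a_1\cdots a_n)=0$, which is exactly condition ii).

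For the harder direction (i) $\Rightarrow$ (ii) I would argue by induction on $n$, the case $n=1$ being vacuous. Using multilinearity together with the already-established vanishing of cumulants having a unit entry, I may assume all $a_k$ are centered, $\phi(a_k)=0$; this leaves the decoration unchanged, so the word stays mixed. I then split on whether $d\in I_n^\epsilon$. If $d\in I_n^\epsilon$, then $\phi(a_1\cdots a_n)=0$ by $\epsilon$-independence, while the moment-cumulant expansion, after discarding the mixed terms (induction hypothesis) and the singleton terms (centering) via the monochromatic obstruction, collapses to $\phi(a_1\cdots a_n)=k_\epsilon(a_1,\dots,a_n)$; hence $k_\epsilon(a_1,\dots,a_n)=0$. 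If instead $d\notin I_n^\epsilon$, there are two positions carrying the same color $c$ such that every index strictly between them commutes with $A_c$; commuting the left one rightwards (a sequence of allowed moves of type (a), since the intervening colors differ from $c$) makes the two $c$-elements adjacent and lets me merge them into one element of $A_c$. This produces a mixed word $\mathbf c$ of length $n-1$, and Theorem \ref{T8} applied to the top partition gives $0=k_\epsilon(\mathbf c)=k_\epsilon(a_1,\dots,a_n)+\sum_\Phi k_\epsilon^\Phi(\mathbf a)$, the sum running over $\Phi\neq\hat 1_n$ with $\Phi\vee\tilde 0_{n-1}=\hat 1_n$. Each surviving $\Phi$ must be monochromatic (induction hypothesis), must separate the two merged positions into different blocks, and must therefore consist of exactly two blocks; but both of those blocks then carry color $c$, forcing the whole word to be monochromatic and contradicting mixedness. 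Hence the remaining sum is empty and $k_\epsilon(a_1,\dots,a_n)=0$.

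I expect the main obstacle to be the bookkeeping in the second case of (i) $\Rightarrow$ (ii). One must verify that cumulants are genuinely invariant under the commutation-induced reordering, that is, that an allowed move of type (a) induces a block-preserving bijection of $\epsilon$-noncrossing partitions under which the moment-cumulant relation is preserved, so that after reordering the top cumulant of $\mathbf c$ truly pairs with $k_\epsilon(a_1,\dots,a_n)$; and one must pin down precisely which $\Phi$ satisfy $\Phi\vee\tilde 0_{n-1}=\hat 1_n$ before invoking the monochromatic obstruction. The obstruction lemma itself, used in both directions, is the conceptual crux, but it is the interplay of reordering, Theorem \ref{T8}, and the induction hypothesis that will require the most care.
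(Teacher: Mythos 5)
Your proposal is correct, but it is built around a different combinatorial core than the paper's proof. The paper runs everything through two technical lemmas -- invariance of $\phi^\Gamma$ and $k_\epsilon^\Gamma$ under allowed moves (Lemma \ref{L1}) and stability of $I_n^\epsilon$ under allowed moves (Lemma \ref{L2}) -- and the recurring manoeuvre is to bring each $\epsilon$-noncrossing $\Gamma$ by an allowed permutation into a form with an interval block, which is then a singleton (killed by centering) or mixed (killed by the induction hypothesis). Your monochromatic obstruction inverts this: for $d\in I_n^\epsilon$, a partition all of whose blocks are monochromatic of size at least two can \emph{never} acquire an interval block, because an element of color $i_p$ with $\epsilon_{i_p c}=0$ trapped between two same-block elements of color $c$ can never cross any $c$-colored element under any allowed move. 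That trapped-element invariant is self-contained (it uses neither Lemma \ref{L1} nor Lemma \ref{L2}), and it makes the direction (ii) $\Rightarrow$ (i) and the case $d\in I_n^\epsilon$ of (i) $\Rightarrow$ (ii) purely combinatorial -- arguably cleaner than the paper's treatment, which invokes cumulant invariance even there. In the remaining case, your minimal surgery (make one same-color pair adjacent, merge it, apply Theorem \ref{T8} with $m=n-1$, and observe that $\Phi\vee\tilde 0_{n-1}=\hat 1_n$ with $\Phi\neq\hat 1_n$ forces exactly two blocks, each necessarily of color $c$) replaces the paper's Step 2, which regroups the whole word into constant runs and feeds the grouped word back into its Step 1; both reductions work. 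Two deferred points must still be supplied to close your argument: first, the same-color pair whose intermediate letters all satisfy $\epsilon_{i_p c}=1$ exists only after choosing a pair of \emph{minimal} distance, since the raw negation of $d\in I_n^\epsilon$ also allows intervening letters of color $c$; second, and more substantially, the invariance $k_\epsilon(a_1,\dots,a_n)=k_\epsilon(a_1,\dots,a_{i+1},a_i,\dots,a_n)$ under type (a) moves, which you flag as a verification, is precisely the paper's Lemma \ref{L1} and is proved there by induction on the moment--cumulant relation using $a_ia_{i+1}=a_{i+1}a_i$; your reordering-and-merging step cannot run without it, so it is a required lemma of your proof rather than bookkeeping, though the bijection-of-partitions strategy you sketch for it is the right one.
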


{Let us point out again that statement ${\rm{(i)}}$ does not depend on the choice of
the $\epsilon_{ii}$, whereas statement ${\rm{(ii)}}$ uses this information, as for different choices of the diagonal values we have different versions of the $k_\epsilon$.}

We first show two technical lemmas.

\begin{lem}\label{L1}
Let $\Gamma$ be a decorated partition and $(a_1,\dots ,a_n)\in A_{d(1)}\times \cdots \times A_{d(n)}$. Then, if $\tau_i$ is an allowed move for $\Gamma$ and $\Gamma'=\tau_i(\Gamma )$,
$$
	\phi^\Gamma (a_1,\ldots ,a_n)=\phi^{\Gamma'}(a_1,\ldots, a_{i+1},a_i,\ldots,a_n)
$$
and
$$
	k_\epsilon^\Gamma (a_1,\ldots ,a_n)=k_\epsilon^{\Gamma'}(a_1,\ldots, a_{i+1},a_i,\ldots,a_n).
$$
\end{lem}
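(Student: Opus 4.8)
The plan is to establish both identities in parallel, organizing the argument around whether the integers $i$ and $i+1$ lie in a single block of $\Gamma$ or in two different blocks. Write $\mathbf b=(a_1,\dots,a_{i-1},a_{i+1},a_i,a_{i+2},\dots,a_n)$ for the tuple obtained from $\mathbf a$ by transposing the entries in positions $i$ and $i+1$; since $\Gamma'=\tau_i(\Gamma)$ carries the decoration $\tau_i(d)$ with $\tau_i(d)(i)=d(i+1)$ and $\tau_i(d)(i+1)=d(i)$, the entries of $\mathbf b$ still lie in the algebras prescribed by $\Gamma'$. Because an allowed move in case (b) requires $i$ and $i+1$ to sit in different blocks, the same-block situation can arise only in case (a), where $d(i)\ne d(i+1)$ and hence $a_i$ and $a_{i+1}$ commute in $A$ by the commutation assumption of Theorem \ref{mainT}.

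First I would dispose of the \emph{different-block case}, which settles both identities at once by a relabeling argument that uses no commutativity. If $i\in\Gamma_s$ and $i+1\in\Gamma_t$ with $s\ne t$, then (as no integer lies strictly between $i$ and $i+1$) the set $\tau_i(\Gamma_s)$ contains $i+1$ in precisely the ordinal position formerly held by $i$; since $b_{i+1}=a_i$ and $b_x=a_x$ for every other $x\in\Gamma_s$, the tensor $b_{\tau_i(\Gamma_s)}$ literally equals $a_{\Gamma_s}$, and symmetrically $b_{\tau_i(\Gamma_t)}=a_{\Gamma_t}$. Every other block is untouched, so applying $\phi$, respectively $k_\epsilon$, blockwise gives $\phi^\Gamma(\mathbf a)=\phi^{\Gamma'}(\mathbf b)$ and $k_\epsilon^\Gamma(\mathbf a)=k_\epsilon^{\Gamma'}(\mathbf b)$.

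In the \emph{same-block case} the $\phi$-identity is immediate: here $\bar\Gamma'=\bar\Gamma$, the block containing $i,i+1$ contributes the factor $\phi(\cdots a_ia_{i+1}\cdots)$ to $\phi^\Gamma(\mathbf a)$ and $\phi(\cdots a_{i+1}a_i\cdots)$ to $\phi^{\Gamma'}(\mathbf b)$, and these agree because $a_i$ and $a_{i+1}$ commute. The cumulant identity is the real content, and I would prove it by induction on $n$. By multiplicativity of $k_\epsilon^\Gamma$ over blocks, everything reduces to the block $\Gamma_s$ containing $i,i+1$: if $\Gamma_s\subsetneq[n]$, then standardizing $\Gamma_s$ produces a cumulant of length $<n$ on which $\tau_i$ induces an allowed move (the consecutive integers $i,i+1$ become adjacent positions), so the inductive hypothesis applies while the remaining factors are unchanged. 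The single genuinely new instance is $\Gamma=\hat1_n$, which I would treat by inverting the moment--cumulant relation:
$$
  k_\epsilon(a_1\otimes\cdots\otimes a_n)=\phi(a_1\cdots a_n)-\sum_{\Gamma\ne\hat1_n,\ \Gamma\in\mathcal P^{d,\epsilon}_n}k_\epsilon^\Gamma(\mathbf a).
$$
Here $\phi(a_1\cdots a_n)=\phi(b_1\cdots b_n)$ by commutativity, and every summand with $\Gamma\ne\hat1_n$ has all blocks of size $<n$, so the different-block argument and the inductive hypothesis give $k_\epsilon^\Gamma(\mathbf a)=k_\epsilon^{\tau_i(\Gamma)}(\mathbf b)$.

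The hard part is then the reindexing of this last sum, and this is where I would spend the most care. In case (a) the move $\tau_i$ is allowed for \emph{every} partition (it depends only on $\epsilon_{d(i)d(i+1)}=1$ and $d(i)\ne d(i+1)$), and I would check that it restricts to an involutive bijection $\mathcal P^{d,\epsilon}_n\to\mathcal P^{\tau_i(d),\epsilon}_n$ fixing $\hat1_n$: it is its own inverse because $\tau_i$ is again allowed for $\tau_i(\Gamma)$, and it preserves $\epsilon$-noncrossingness because $\tau_i$ can be absorbed into the first allowed permutation of any trivialization, turning a trivialization of $\Gamma$ into one of $\tau_i(\Gamma)$. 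Granting this, $\sum_{\Gamma\ne\hat1_n}k_\epsilon^{\tau_i(\Gamma)}(\mathbf b)$ equals $\sum_{\Phi\ne\hat1_n}k_\epsilon^\Phi(\mathbf b)$ over $\Phi\in\mathcal P^{\tau_i(d),\epsilon}_n$, and comparing with the moment--cumulant relation for $\mathbf b$ yields $k_\epsilon(a_1\otimes\cdots\otimes a_n)=k_\epsilon(b_1\otimes\cdots\otimes b_n)=k_\epsilon^{\Gamma'}(\mathbf b)$, closing the induction.
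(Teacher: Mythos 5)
Your proposal is correct and takes essentially the same approach as the paper: the same case split into different blocks (pure relabeling, no commutativity needed) versus same block (necessarily case (a), hence commutativity of $a_i$ and $a_{i+1}$), with the cumulant identity proved by induction on $n$ through the moment--cumulant relation applied to $\mathbf a$ and to the swapped tuple. The only difference is one of detail: you explicitly verify that $\tau_i$ induces an involutive bijection $\mathcal P^{d,\epsilon}_n \to \mathcal P^{\tau_i(d),\epsilon}_n$ preserving $\epsilon$-noncrossingness, a point the paper's terser argument leaves implicit.
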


\begin{proof}
When $i$ and $i+1$ belong to different blocks, the  identities follow at once from the definitions. When this is not the case, $d(i)\not= d(i+1)$ and we have the commutativity relation $a_ia_{i+1}=a_{i+1}a_i$. Under this assumption, the first identity follows then from the properties of $\phi$. The second follows by induction from the identity $\phi(a_1\cdots a_n)=\phi(a_1\cdots a_{i+1}a_i\cdots a_n)$ together with the identity relating $\phi$ and $k_\epsilon$ (using the induction hypothesis stating that  $k_\epsilon (a_1,\ldots ,a_n)$ equals $k_\epsilon (a_1,\dots, a_{i+1},a_i,\dots,a_n)$, and noticing that it implies that $k_\epsilon^\Gamma (a_1, \ldots , a_m) = k_\epsilon^{\Gamma'}(a_1, \ldots, a_{i+1},a_i,\ldots, a_m)$ for all decorated partitions $\Gamma$ whose blocks contain at most $n$ elements).
\end{proof}

The following lemma follows from the definitions.

\begin{lem}\label{L2}
Assume that $d=(i_1,\ldots,i_n) \in I_n^\epsilon$, then for any $d$-decorated partition $\Gamma$ and allowed move $\tau_k$ for $\Gamma$ we also have that $(i_1,\ldots,i_{k+1}, i_k, \ldots, i_n)$ is in $I_n^\epsilon$. As a consequence, $I_n^\epsilon$ is stable under the action of allowed permutations.
\end{lem}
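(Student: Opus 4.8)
The plan is to reduce everything to a statement about the tuple of values and to verify the defining condition of $I_n^\epsilon$ directly for the transposed tuple. First I would record the key preliminary observation: if $(i_1,\dots,i_n)\in I_n^\epsilon$, then necessarily $i_k\neq i_{k+1}$ for every $k$, since otherwise the pair of equal values at positions $k$ and $k+1$ would admit no index $p$ strictly between them, violating the definition of $I_n^\epsilon$. Consequently, whenever $\tau_k$ is an allowed move for $\Gamma$, the hypothesis $\mathbf i\in I_n^\epsilon$ forces us into case (a) of the definition of an allowed move, and the only feature of the allowed move I will actually use is that $\epsilon_{i_k i_{k+1}}=1$; the block condition plays no role here.

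Write $\mathbf i'=(i_1,\dots,i_{k-1},i_{k+1},i_k,i_{k+2},\dots,i_n)$ for the transposed tuple, so that $i'(j)=i(j)$ for $j\notin\{k,k+1\}$, while $i'(k)=i_{k+1}$ and $i'(k+1)=i_k$. I must show that every pair $a<b$ with $i'(a)=i'(b)$ admits a witness $p$, with $a<p<b$, $i'(p)\neq i'(a)$ and $\epsilon_{i'(p)i'(a)}=0$. Since $i'(k)=i_{k+1}\neq i_k=i'(k+1)$, the pair $(k,k+1)$ itself never has equal values, so I split into cases according to how $\{a,b\}$ meets $\{k,k+1\}$.

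If $\{a,b\}\cap\{k,k+1\}=\emptyset$, then $i(a)=i(b)$ and I transport a witness $p$ from the $I_n^\epsilon$ condition for $\mathbf i$. If that $p$ lies outside $\{k,k+1\}$ it serves directly; if $p=k$, then both $k$ and $k+1$ lie strictly between $a$ and $b$ (as $b\geq k+2$), and position $k+1$ carries the value $i_k=i(p)$ in $\mathbf i'$, so $p'=k+1$ is a valid witness, and symmetrically for $p=k+1$. If exactly one of $a,b$ lies in $\{k,k+1\}$, I relate the pair to a pair in $\mathbf i$ through the other swapped position: for instance if $a=k$, then $i'(a)=i_{k+1}=i(k+1)$, so $i(b)=i_{k+1}$, and the pair $(k+1,b)$ in $\mathbf i$ furnishes a witness $q$ with $k+1<q<b$, which lies outside $\{k,k+1\}$ and hence transports unchanged. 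The delicate instances are $a=k+1$ (with $i'(a)=i_k$, handled via the pair $(k,b)$ in $\mathbf i$) and $b=k$ (with $i'(b)=i_{k+1}$, handled via the pair $(a,k+1)$ in $\mathbf i$): here the witness supplied by $I_n^\epsilon$ could a priori land on the forbidden swapped position, but this is precisely excluded by $\epsilon_{i_k i_{k+1}}=1$, since such a witness would force $\epsilon_{i_k i_{k+1}}=0$. This is the single point where the allowed-move hypothesis is genuinely used, and it is the main (if modest) obstacle in the argument.

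Finally, the stability of $I_n^\epsilon$ under allowed permutations follows by induction on the number of transpositions in a decomposition $\sigma=\tau_{j_m}\cdots\tau_{j_1}$ into allowed moves: each $\tau_{j_l}$ is by definition an allowed move for the intermediate decorated partition, whose underlying decoration lies in $I_n^\epsilon$ by the inductive hypothesis, so the first part of the lemma applies at each step and the resulting decoration $\sigma(d)$ is again in $I_n^\epsilon$.
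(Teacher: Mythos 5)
Your proof is correct: the paper offers no argument for this lemma beyond stating that it ``follows from the definitions,'' and your case analysis is exactly the verification being invoked, including the one genuinely non-trivial point — that in the cases $a=k+1$ and $b=k$ the witness supplied by the $I_n^\epsilon$-condition cannot land on the swapped position, since that would force $\epsilon_{i_k i_{k+1}}=0$ and contradict the allowed-move hypothesis $\epsilon_{i_k i_{k+1}}=1$. The only cosmetic omission is the case $b=k+1$, which is symmetric to $a=k$ (the witness for the pair $(a,k)$ in $\mathbf{i}$ lies strictly below $k$ and transports unchanged), and your induction for the final statement correctly uses that each $\tau_{j_l}$ is an allowed move for the intermediate partition, whose decoration lies in $I_n^\epsilon$ by the inductive hypothesis.
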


\begin{rmk} {\rm{ Note that allowed permutations do not form a group due to the fact that whether a permutation is  allowed or not depends on the decorated partition under consideration. Decorations in $I_n^\epsilon$ and allowed permutations do form a {\it{groupo\"\i d}}, and ``stability'' has to be understood in that sense, that is, an allowed permutation with respect to a given $d$-decorated partition with $d\in I_n^\epsilon$ maps this decoration to another element of $I_n^\epsilon$.}}
\end{rmk}

\begin{proof}[Proof of Thm.~\ref{mainT}] Let us prove Theorem \ref{mainT} by showing first that (i) implies (ii) using induction. By Lemma 4, we can always assume that the $a_i$ are centered ($\phi(a_i)=0$). For $n=2$ and $i_1\not= i_2$, we have, due to $\epsilon$-independence, 
$$
	0=\phi(a_1a_2)=k_\epsilon (a_1,a_2)+k_\epsilon (a_1)k_\epsilon (a_2)=k_\epsilon (a_1,a_2).
$$
Let us assume that the property holds till rank $n-1\geq 2$.

Step 1. Let us assume first that {$d=(i_1,\dots,i_n) \in I_n^\epsilon$}. From $\epsilon$-independence we have
$$
	0={ \phi(a_1\cdots a_n)}=k_\epsilon (a_1,\dots,a_n)+\sum\limits_{\Gamma \in {\bar{\mathcal P}}^{{d},\epsilon}_{n}}k_\epsilon^\Gamma({\mathbf a}),
$$
where ${\bar{\mathcal P}}^{{d},\epsilon}_{n}$ stands for the set of nontrivial $d$-decorated $\epsilon$-noncrossing partitions, i.e., ${{\mathcal P}}^{{d},\epsilon}_{n} -\{\hat1_n\}$. Let us show that $k_\epsilon^\Gamma({\mathbf a})=0$ for all $\Gamma\in{\bar{\mathcal P}}^{{d},\epsilon}_{n}$, which will imply the property. Since an $\epsilon$-noncrossing partition $\Gamma$ can be transformed into an $\epsilon$-noncrossing partition $\Gamma'$ containing a block $\Gamma'_i$, which is an interval, by an allowed permutation, Lemma \ref{L1} and Lemma \ref{L2} above imply that we can always assume, up to an allowed permutation of the $a_i$, that $\Gamma$ has this property. This interval is therefore either a singleton or, by the assumption {$d=(i_1,\dots,i_n) \in I_n^\epsilon$}, it contains at least two elements with different decorations. The vanishing of $k_\epsilon^\Gamma(\mathbf a)$ follows then from the induction hypothesis.

Step 2. Assume that there exist distinct $k, l$ with $i_k \not= i_l$. We have to show that $k_\epsilon (a_1 , \dots , a_n ) = 0$. Lemmas \ref{L1} and \ref{L2} imply that we do not change the problem if we let an allowed permutation reorder the $a_i$. In particular, we can permute the $a_i$ in such a way that $a_1,\dots,a_{p_1}$ belong to the same algebra $A_{i_{p_1}}$, $a_{p_1+1},\dots,a_{p_2}$ belong to the same algebra $A_{i_{p_2}}$, and so on, until $a_{p_{m-1}+1},\dots,a_{n}$ belong to the same algebra $A_{i_{n}}$, so that we also have $(i_{p_1},\dots, i_n)\in I_m^\epsilon$. Note that $m \geq 2$
because of our assumption $i_k \not= i_l$. We set $b_1:=a_1\cdots a_{p_1}$, $b_2:=a_{p_1+1}\cdots a_{p_2}$, and so on, until $b_m:=a_{p_{m-1}+1}\cdots a_{n}.$ Then, by Theorem \ref{T8}, we have
 $$
 	k_\epsilon (b_1,\dots,b_m)=\sum\limits_{\Phi \in \mathcal{P}^{\tilde{d},\epsilon}_n \atop \Phi\vee \tilde 0_m=[n]}k_\epsilon^\Phi(a_1,\dots ,a_n).
$$
Since $(i_{p_1},\dots, i_n)\in I_m^\epsilon$ we have $k_\epsilon (b_1,\dots,b_m)=0$ according to Step 1. Furthermore, for $\Phi\not= [n]$, the term $k_\epsilon^\Phi(a_1,\ldots ,a_n)$ is a product of cumulants of lengths strictly smaller than $n$. Thus our induction hypothesis applies to them and we see that $k_\epsilon^\Phi(a_1,\ldots ,a_n)$ can be different from zero only if each block of $\Phi$ contains only elements from the same subalgebra. However, we are only looking at $\Phi$ with the additional property that $\Phi\vee \tilde 0_m=[n]$, which means that $\Phi$ has to couple all blocks of $\tilde 0_m$. This is possible only if a block of $\Phi$ contains two elements in different algebras, hence a contradiction if we had $k_\epsilon^\Phi(a_1,\ldots ,a_n)\not= 0$. Thus there is no non-vanishing contribution in the above sum from the non trivial partitions in the right hand side and we get $k_\epsilon (a_1 , \ldots , a_n ) = 0$.
 
The direction (ii) implies (i) is simpler: Consider $a_j \in A_{i_j}, j = 1, \dots, n$ with $(i_1 ,\ldots,i_n)\in I_n^\epsilon$ and $\phi(a_j) = 0$,  for $j = 1, \ldots, n $. Then we have to show that $\phi (a_1 \cdots a_n ) = 0$. However, we have $\phi (a_1 \cdots a_n )=\sum_{\Gamma}k_\epsilon^\Gamma (a_1,\ldots,a_n)$ where $\Gamma$ runs over all $\epsilon$-noncrossing partitions associated to the decoration $(i_1 ,\dots,i_n)$. By Lemmas \ref{L1} and \ref{L2}, to show that $k_\epsilon^\Gamma (a_1,\ldots,a_n)=0$ we can always assume that $\Gamma$ contains a block which is an interval. Making this assumption, each product $k_\epsilon^\Gamma (a_1,\ldots,a_n)=\prod_{i=1}^k k_\epsilon (a_{\Gamma_i})$ contains at least one factor of the form $k_\epsilon (a_l , a_{l+1} , \ldots , a_{l+p} )$ which vanishes, {for $p>0$, because of our assumption on the vanishing of mixed cumulants and, for $p=0$, by the assumption that $\phi(a_l)=0$.}
\end{proof}

{By taking also into account our Remark \ref{remark1}, the vanishing of mixed cumulants allows us to recover directly the
main result from \cite{speichWyso_2016}. Note that we have here actually 
the more general version of this as alluded to in Remark 5.4 of  \cite{speichWyso_2016}. 

For $d\in I^{[n]}$ we write $\ker d$ for the partition $\coprod\limits_{i\in I}d^{-1}(\{i\})$ (i.e. the one whose blocks are the subsets of $[n]$ of elements with the same decoration). Notice that for a $d$-decorated partition $\Gamma$, $\Gamma\leq \ker d$ means precisely that elements in an arbitrary block of $\Gamma$ belong to the same subalgebra.

\begin{cor}
Let $(A,\phi)$ be a noncommutative probability space and assume that subalgebras $A_i \subset A$, $i \in I$, are {\rm{$\epsilon$-independent}}. Consider $d\in I^{[n]}$ and $a_j\in A_{d(j)}$. Then we have for the mixed moment
$$
	\phi(a_1\cdots a_n) = \sum\limits_{\Gamma \in {\mathcal P}^{d,\epsilon}_{n} \atop \Gamma\leq \ker d} 
	k_\epsilon^{\Gamma}({\mathbf a}),
$$
where
$k_\epsilon^{\Gamma}({\mathbf a})$ is a product of classical and free cumulants, according to the blocks of $\Gamma$ and our choice of diagonal elements $\epsilon_{ii}$ for the $i$-value of the block; a block with $\epsilon_{ii}=1$ contributes a classical cumulant as factor, a block with $\epsilon_{ii}=0$ contributes a free cumulant as factor. 
\end{cor}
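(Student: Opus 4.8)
The plan is to read the formula directly off the definition of $\epsilon$-cumulants, pruning the defining sum by means of Theorem~\ref{mainT} and then identifying the surviving factors via Remark~\ref{remark1}. By definition, $\phi(a_1\cdots a_n)=\sum_{\Gamma\in{\mathcal P}^{d,\epsilon}_{n}}k_\epsilon^\Gamma({\mathbf a})$, where $k_\epsilon^\Gamma({\mathbf a})=\prod_{i=1}^{|\Gamma|}k_\epsilon(a_{\Gamma_i})$ according to the convention in \eqref{def:tensormaps}. The entire task is thus to show that only partitions $\Gamma\leq\ker d$ contribute, and that each factor in the corresponding product is a classical or free cumulant as claimed.

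First I would argue that the factors attached to impure blocks vanish. Each factor $k_\epsilon(a_{\Gamma_i})$ is, by construction, the value of $k_\epsilon$ on the tuple indexed by $\Gamma_i$, hence an $\epsilon$-cumulant attached to the standardized restriction of $d$ to that block, depending only on that restricted decoration. If the elements of $\Gamma_i$ do not all lie in the same subalgebra, i.e.\ the restricted decoration takes at least two distinct values, then Theorem~\ref{mainT}, applied to the $\epsilon$-independent family $(A_i)_{i\in I}$, forces $k_\epsilon(a_{\Gamma_i})=0$, and therefore $k_\epsilon^\Gamma({\mathbf a})=0$. Consequently the only nonvanishing terms come from partitions $\Gamma$ all of whose blocks carry a constant decoration, which is exactly the condition $\Gamma\leq\ker d$ recorded immediately before the statement. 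This already reduces the defining sum to $\sum_{\Gamma\leq\ker d}k_\epsilon^\Gamma({\mathbf a})$.

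It remains to identify the surviving factors. For $\Gamma\leq\ker d$ every block $\Gamma_i$ carries a constant decoration, say with value $i$, so its arguments all belong to $A_i$ and the factor $k_\epsilon(a_{\Gamma_i})$ is an $\epsilon$-cumulant of the monochromatic type $(i,\dots,i)$. By Remark~\ref{remark1} such a cumulant coincides with the classical cumulant when $\epsilon_{ii}=1$ and with the free cumulant when $\epsilon_{ii}=0$. Substituting these identifications into $k_\epsilon^\Gamma({\mathbf a})=\prod_{i}k_\epsilon(a_{\Gamma_i})$ yields the asserted product of classical and free cumulants, completing the argument.

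I do not anticipate a genuine obstacle, since the statement is assembled entirely from results already established. The only point requiring care is the justification that the value of a single-block factor $k_\epsilon(a_{\Gamma_i})$ is intrinsic to the block, depending only on its (standardized) decoration and not on the ambient partition. This is precisely what licenses both the application of Theorem~\ref{mainT} to impure blocks and the monochromatic identification through Remark~\ref{remark1}; it follows from the fact that $k_\epsilon$ is defined as a family of multilinear maps indexed by decorations, together with the subpartition stability established in Section~\ref{sect:lattice}.
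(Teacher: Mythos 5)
Your proposal is correct and follows essentially the same route as the paper: the paper derives the corollary precisely by combining the defining relation $\phi(a_1\cdots a_n)=\sum_{\Gamma\in{\mathcal P}^{d,\epsilon}_{n}}k_\epsilon^\Gamma({\mathbf a})$ with the vanishing of mixed cumulants (Theorem~\ref{mainT}) to prune the sum to $\Gamma\leq\ker d$, and then invokes Remark~\ref{remark1} to identify each monochromatic factor as a classical cumulant ($\epsilon_{ii}=1$) or a free cumulant ($\epsilon_{ii}=0$). Your added care about the block factors $k_\epsilon(a_{\Gamma_i})$ depending only on the restricted decoration is sound and is indeed immediate from the definition \eqref{def:tensormaps} of $k_\epsilon^\Gamma$ as a product over blocks.
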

}


\end{document}